\documentclass[12pt]{amsart}
\usepackage[headings]{fullpage}
\usepackage{amssymb,epic,eepic,epsfig,amsbsy,amsmath,amscd,graphicx}
\usepackage[all]{xy}

\numberwithin{equation}{section}
                        \textwidth16cm
                        \textheight23cm
                        \topmargin-1cm
                        \oddsidemargin 0.2cm
                        \evensidemargin 0.2cm
                        \theoremstyle{plain}
\usepackage{mathrsfs}

\newcommand{\psdraw}[2]
         {\begin{array}{c} \hspace{-1.3mm}
         \raisebox{-4pt}{\psfig{figure=#1.eps,width=#2}}
         \hspace{-1.9mm}\end{array}}

\newtheorem{theorem}{Theorem}[section]
\newtheorem{thm}{Theorem}
\newtheorem{lemma}[theorem]{Lemma}

\newtheorem{proposition}[theorem]{Proposition}
\newtheorem{conjecture}{Conjecture}

\theoremstyle{definition}

\newcommand\no[1]{}
\newcommand{\eqM}{\overset{M}{=}}

\newcommand{\lcr}{\raisebox{-5pt}{\mbox{}\hspace{1pt}
                  \epsfig{file=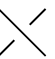}\hspace{1pt}\mbox{}}}
\newcommand{\ift}{\raisebox{-5pt}{\mbox{}\hspace{1pt}
                  \epsfig{file=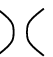}\hspace{1pt}\mbox{}}}
\newcommand{\zer}{\raisebox{-5pt}{\mbox{}\hspace{1pt}
                  \epsfig{file=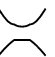}\hspace{1pt}\mbox{}}}

\def\BC{\mathbb C}

\def\BZ{\mathbb Z}
\def\BR{\mathbb R}
\def\BT{\mathbb T}
\def\BJ{\mathbb J}

\def\CA{\mathcal A}

\def\CL{\mathcal L}
\def\CM{\mathcal M}

\def\CP{\mathcal P}
\def\CR{\mathcal R}
\def\CS{\mathcal S}
\def\CT{\mathcal T}

\def\fp{\mathfrak p}
\def\ft{\mathfrak t}

\def\la{\langle}
\def\ra{\rangle}

\def\ve{\varepsilon}
\def\be { \begin{equation} }
\def\ee { \end{equation} }

\begin{document}

\title[AJ conjecture]{On the AJ conjecture for cables of twist knots}

\author[Anh T. Tran]{Anh T. Tran}
\address{Department of Mathematical Sciences, The University of Texas at Dallas,  Richardson TX 75080, USA}
\email{att140830@utdallas.edu}

\begin{abstract}
We study the AJ conjecture that relates the A-polynomial and the colored Jones polynomial of a knot in $S^3$. We confirm the AJ conjecture for $(r,2)$-cables of the $m$-twist knot, for all odd integers $r$ satisfying $\begin{cases} (r+8)(r-8m)>0 &\mbox{if } m> 0, \\ 
r(r+8m-4)>0 &\mbox{if } m<0. \end{cases} $
\end{abstract}

\thanks{2010 {\em Mathematics Classification:} Primary 57N10. Secondary 57M25.\\
{\em Key words and phrases: colored Jones polynomial, A-polynomial, AJ conjecture, cable, twist knot.}}

\maketitle

\section{Introduction}

\subsection{The colored Jones function} For a knot $K$ in $S^3$ and a positive integer $n$, let $J_K(n) \in \BZ[t^{\pm 1}]$ denote the $n^{\text{th}}$ colored Jones polynomial of $K$ with framing 0. The polynomial $J_K(n)$ is the quantum link invariant, as defined by Reshetikhin and Turaev \cite{RT}, associated to the Lie algebra $sl_2(\BC)$, with the color $n$ standing for the simple $sl_2(\BC)$-representation of dimension $n$. Here we use the functorial normalization, i.e. the one for which the colored Jones polynomial of the unknot $U$ is
$$J_U(n)=[n] := \frac{t^{2n}- t^{-2n}}{t^2 -t^{-2}}.$$ 

It is known that $J_K(1)=1$ and $J_K(2)$ is the ordinary Jones polynomial \cite{Jones}. The colored Jones polynomials of higher colors are more or less the ordinary Jones polynomials of parallels of the knot. The color $n$ can be assumed to take negative integer values by setting $J_K(-n) = - J_K(n)$ and $J_K(0)=0$.

For a fixed knot $K$, Garoufalidis and Le \cite{GL} proved that the colored Jones function $J_K: \BZ \to \BZ[t^{\pm 1}]$ satisfies a non-trivial linear recurrence relation of the form $$\sum_{i=0}^da_i(t,t^{2n})J_K(n+i)=0,$$ 
where $a_i(u,v) \in \BC[u,v]$ are polynomials with greatest common divisor 1. 

\subsection{Recurrence relations and $q$-holonomicity} Let $\CR:=\BC[t^{\pm 1}]$. Consider a function $f: \BZ \to \CR$, and define the linear operators $L$ and $M$ acting on such functions by
$$(Lf)(n) := f(n+1), \qquad (Mf )(n) := t^{2n} f(n).$$
It is easy to see that $LM = t^2 ML$. The inverse operators $L^{-1}, M^{-1}$ are well-defined. We can consider $L,M$ as elements of the quantum torus
$$ \mathcal T := \mathbb \CR\langle L^{\pm1}, M^{\pm 1} \rangle/ (LM - t^2 ML),$$
which is a non-commutative ring.

The recurrence ideal of $f$ is the left ideal $\CA_f$ in $\CT$ that annihilates $f$:
$$\CA_f:=\{P \in \CT \mid Pf=0\}.$$
We say that $f$ is $q$-holonomic, or $f$ satisfies a non-trivial linear recurrence relation, if $\CA_f \not= 0$. For example, for a fixed knot $K$ the colored Jones function $J_K$ is $q$-holonomic.

\subsection{The recurrence polynomial} Suppose $f: \BZ \to \CR$ is a $q$-holonomic function. Then $\CA_f$ is a non-zero left ideal of $\CT$. The ring $\mathcal T$ is not a principal left ideal domain. However, it can be embeded into a principal left ideal domain as follows. Let $\CR(M)$ be the fractional field of the polynomial ring $\CR[M]$. Let $\tilde{\CT}$ be the  set of all Laurent polynomials in the variable $L$ with coefficients in $\CR(M)$:
$$\tilde{\CT} =\left\{\sum_{i \in \BZ} a_i(M)L^i \mid a_i(M) \in \CR(M),~a_i=0\text{~almost always}\right\},$$
and define the product in $\tilde{\CT}$ by $a(M)L^{k} \cdot b(M)L^{l}=a(M)b(t^{2k}M)L^{k+l}$.  

Then it is known that $\tilde{\CT}$ is a principal left ideal domain, and $\CT$ embeds as a subring of $\tilde{\CT}$, c.f. \cite{Ga04}. The ideal extension $\tilde\CA_f:=\tilde\CT\mathcal A_f$ of $\CA_f$ in $\tilde{\CT}$ is generated by a polynomial 
$$\alpha_f(t,M,L) = \sum_{i=0}^{d} \alpha_{f,i}(t,M) \, L^i,$$
where the degree in $L$ is assumed to be minimal and all the
coefficients $\alpha_{f,i}(t,M)\in \BC[t^{\pm1},M]$ are assumed to
be co-prime. The polynomial $\alpha_f$ is defined up to a polynomial in $\mathbb C[t^{\pm 1},M]$. We call $\alpha_f$ the recurrence polynomial of  $f$.

When $f$ is the colored Jones function $J_K$ of a knot $K$, we let $\CA_K$ and $\alpha_K$ denote the recurrence ideal $\CA_{J_K}$ and the recurrence polynomial $\alpha_{J_K}$ of $J_K$ respectively. We also say that $\CA_K$ and $\alpha_{K}$ are respectively the recurrence ideal and the recurrence polynomial of the knot $K$. Since $J_K(n) \in \BZ[t^{\pm 1}]$, we can assume that $\alpha_K(t,M,L)=\sum_{i=0}^d \alpha_{K,i}(t,M)L^i$
where all the coefficients $\alpha_{K,i} \in \BZ[t^{\pm 1}, M]$ are co-prime.

\subsection{The AJ conjecture}

The colored Jones polynomials are powerful invariants of knots, but little is known about their relationship with classical topology invariants like the fundamental group. Motivated by the theory of noncommutative A-ideals of Frohman, Gelca and Lofaro \cite{FGL, Ge} and the theory of $q$-holonomicity of quantum invariants of Garoufalidis and Le \cite{GL}, Garoufalidis \cite{Ga04} formulated the following conjecture that relates the A-polynomial and the colored Jones polynomial of a knot in $S^3$.

\begin{conjecture}
\label{c1}
{\bf (AJ conjecture)} For every knot $K$, $\alpha_K |_{t=-1}$ is equal to the $A$-polynomial, up to a factor depending on $M$ only.
\end{conjecture}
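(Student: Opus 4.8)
The plan would be to realize both the recurrence polynomial and the A-polynomial inside the Kauffman bracket skein theory of the knot exterior $E_K := S^3 \setminus N(K)$, and then pass to the classical limit $t=-1$. The colored Jones function $J_K$ is recovered from the skein module $\CS(E_K)$, viewed as a module over the skein algebra $\CS(\partial E_K)$ of the peripheral torus, by coloring the boundary with Jones--Wenzl idempotents; under this correspondence the operators $M$ and $L$ of the quantum torus $\CT$ act as the images of the meridian and longitude. First I would set up this dictionary precisely, so that $\CA_K \subset \CT$ is identified (after localizing coefficients over $\CR(M)$) with the annihilator of the distinguished skein element, and so that the Frohman--Gelca--Lofaro peripheral ideal $\CP \subset \CS(\partial E_K)$ maps into $\widetilde{\CA}_K$. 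In this language $\al_K$ is Garoufalidis' quantum A-polynomial $\hat A_K$, and the conjecture asserts that its classical limit recovers the geometric A-polynomial.

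The second step is the classical limit. At $t=-1$ the quantum torus degenerates to the commutative ring $\BC[M^{\pm1},L^{\pm1}]$, and by the Bullock and Przytycki--Sikora theorem $\CS(E_K)|_{t=-1}$ is, modulo nilpotents, the coordinate ring $\BC[X(E_K)]$ of the $SL_2(\BC)$ character variety, with $\CP|_{t=-1}$ cutting out the image of the restriction map $X(E_K) \to X(\partial E_K)$, whose one-dimensional part is precisely the A-polynomial curve. Thus the problem reduces to showing that $\al_K|_{t=-1}$, the minimal-degree generator of $\widetilde{\CA}_K$ specialized at $t=-1$, generates the same principal ideal as $A_K(M,L)$ up to a factor in $M$ alone. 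Finally I would isolate the abelian locus: the reducible characters always contribute the factor $L-1$ to $A_K$, which the colored Jones recurrence need not see in the same way, and this is exactly the source of the permitted $M$-dependent factor; I would compare the behavior of $\al_K|_{t=-1}$ along the reducible locus with the known $(L-1)$ contribution so that the remaining non-abelian factors can be matched one-to-one.

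The hard part, and the reason this conjecture remains open in general, is controlling both the identification and the specialization. One must prove that $\CA_K$ is not strictly larger than the ideal generated by $\CP$ --- that the colored Jones recurrence detects nothing beyond the character variety --- which is a $q$-holonomicity-versus-skein-module comparison with no known uniform proof. Even granting equality of ideals, passage to $t=-1$ need not commute with extracting the minimal generator: the degree in $L$ may drop or spurious content may appear, since a generator of $\widetilde{\CA}_K$ over $\CR(M)$ can gain or lose content upon setting $t=-1$. I would attack this by analyzing the Newton polygon of $\al_K$, matching its leading terms against the slopes and boundary structure of $A_K$, and establishing a saturation or irreducibility statement on the non-abelian component so that no proper factor of $A_K$ can survive and no extra factor beyond the allowed $M$-power can intrude. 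Because these two comparison steps are precisely what cannot be verified \emph{a priori} for an arbitrary $K$, any unconditional proof seems to require either a new structural theorem relating the recurrence ideal to the peripheral ideal for all knots, or --- as in the present paper --- a reduction to explicitly computable families where both $\al_K$ and $A_K$ can be produced and compared directly.
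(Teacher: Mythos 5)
There is a genuine gap, and it is one you yourself name: the statement you set out to prove is Conjecture \ref{c1}, the AJ conjecture for \emph{every} knot, which is an open problem. The paper does not prove it --- it proves only the special case stated as Theorem \ref{main} --- and your text does not prove it either; it is a research program. The two steps you flag as ``the hard part'' are precisely the content of the conjecture, and your proposal offers no mechanism to close either of them for an arbitrary knot. First, Proposition \ref{PA} gives only the inclusion $\CP \subset \CA_K$, so a generator coming from skein theory is merely \emph{divisible} by $\alpha_K$; to conclude anything about $\ve(\alpha_K)$ one needs a converse comparison, which is unknown in general. Second, even granting that, one must rule out factors of $A_K$ being lost (or spurious $L$-dependent factors appearing) when the minimal generator is specialized at $t=-1$; saying one would ``analyze the Newton polygon'' or ``establish a saturation statement'' names this difficulty without resolving it. So as a proof of the stated conjecture the proposal fails, necessarily.

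That said, your outline is an accurate reconstruction of the framework the paper actually uses to prove its special case, and it is worth recording how the paper closes the two gaps there. The upper bound is exactly your mechanism: an explicit annihilator $\beta$ of $\BJ_{K_m}$ is produced from the quantum peripheral ideal via the action matrix on the (finitely generated, free) skein module of the twist-knot exterior, so that $\ve(\alpha_{\BJ_{K_m}})$ divides $\ve(\beta) \eqM (L-1)R_{K_m}(L,M^2)$, with the link to the A-polynomial supplied by the Bullock/Przytycki--Sikora theorem and Proposition \ref{A_B}. The lower bound --- the step with no general analogue --- is handled by facts special to this family: $\ve(\alpha_{\BJ_{K_m}})$ is divisible by $L-1$ and has $L$-degree greater than $1$ (properties of alternating knots), and $R_{K_m}(L,M^2)$ is \emph{irreducible} (Proposition \ref{R}), so divisibility forces equality up to an $M$-factor. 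The passage from the twist knot to its $(r,2)$-cable is then controlled by Proposition \ref{divisible} and the Ni--Zhang cabling formula \eqref{A_cable}, which is where the numerical conditions on $r$ enter. In short: your sandwich strategy is the right one and is the paper's strategy, but it only becomes a proof when both sides of the sandwich can be verified, and at present that is possible only for explicitly computable families, not for all knots.
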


The A-polynomial of a knot was introduced by Cooper et al. \cite{CCGLS}; it describes the $SL_2(\BC)$-character variety of the knot complement as viewed from the boundary torus. The A-polynomial carries important information about the topology of the knot. For example, the sides of its Newton polygon give rise to incompressible surfaces in the knot complement \cite{CCGLS}. Here in the definition of the $A$-polynomial, we also allow the factor $L-1$ coming from the abelian character variety of the knot. Hence the A-polynomial in this paper is equal to $L-1$ times the A-polynomial defined in \cite{CCGLS}.

The AJ conjecture has been verified for the trefoil and figure eight knots  \cite{Ga04}, all torus knots  \cite{Hi, Tr}, some classes of two-bridge knots and pretzel knots \cite{Le06, LTaj}, the knot $7_4$  \cite{GK}, and most cabled knots over torus knots and the figure eight knot \cite{RZ, Ru, Traj8}. 

\subsection{Main result} Suppose $K$ is a knot with framing 0, and $r,s$ are two integers with $c$ their greatest common divisor. The $(r,s)$-cable $K^{(r,s)}$ of $K$ is the
link consisting of $c$ parallel copies of the $(\frac{r}{c},\frac{s}{c})$-curve on the torus boundary of a tubular neighborhood of $K$. Here an $(\frac{r}{c},\frac{s}{c})$-curve is a curve that is homologically
equal to $\frac{r}{c}$ times the meridian and $\frac{s}{c}$ times the longitude on the torus boundary.
The cable $K^{(r,s)}$ inherits an orientation from $K$, and we assume that each component of $K^{(r,s)}$ has framing 0. Note that if $r$ and $s$ are co-prime, then $K^{(r,s)}$ is again a knot. 

Consider the $m$-twist knot $K_m$ in Figure 1, where $m$ denotes the number of  full twists;  positive (resp. negative) numbers correspond to right-handed (resp. left-handed) twists. Note that $K_{-1}$ is the trefoil knot and $K_1$ is the figure eight knot.

\begin{figure}[htpb]
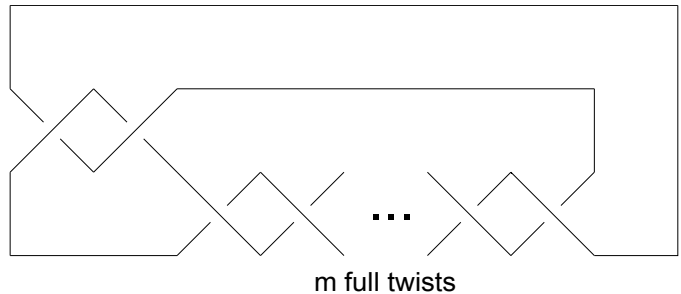

$$\psdraw{drawing-twist}{3.5in} $$
\caption{The $m$-twist knot $K_m$, $m \in \BZ$.}
\label{twist_knot}
\end{figure}
In \cite{RZ, Ru, Traj8} the proof of the AJ conjecture for most cabled knots over torus knots and the figure eight knot is based on explicit formulas for the colored Jones polynomial. In this paper, following the geometric method in \cite{Le06, LTaj} we use skein theory to study the AJ conjecture for cables of twist knots. In particular, we will show the following.

\begin{thm}
\label{main}
The AJ conjecture holds true for $(r,2)$-cables of the $m$-twist knot, for all odd integers $r$ satisfying $\begin{cases} (r+8)(r-8m)>0 &\mbox{if } m> 0, \\ 
r(r+8m-4)>0 &\mbox{if } m<0. \end{cases} $
\end{thm}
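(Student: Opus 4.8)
The plan is to adapt the skein-theoretic machinery of \cite{Le06, LTaj} to the cable: produce an explicit recurrence operator for $J_{K_m^{(r,2)}}$ from a cabling formula, compute the A-polynomial of the cable from its character variety, and match the two under the reduction $t=-1$. The logical structure is the one common to AJ papers: the divisibility of $A_{K_m^{(r,2)}}$ into $\alpha_{K_m^{(r,2)}}|_{t=-1}$ (up to a factor in $M$) is the more accessible direction and follows from the skein-module description of the classical limit, so the real content of the theorem is ruling out any extra factor. This is where the hypothesis on $r$ enters, through a comparison of $L$-degrees.

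I would begin with the cabling formula. Expanding the $(r,2)$-torus-knot pattern colored by $n$ in the Chebyshev (core-coloring) basis of the Kauffman bracket skein module of the solid torus, and using that the $r$ twists act on the summand colored by $k$ through a framing eigenvalue $t^{\,r\,\phi(k)}$ with $\phi$ quadratic in $k$ (after the normalization giving each component framing $0$), one obtains
$$J_{K_m^{(r,2)}}(n)=\sum_{k=1,3,\dots}^{2n-1} c_k\,t^{\,r\,\phi(k)}\,J_{K_m}(k),$$
with explicit coefficients $c_k$. The twist knot $K_m$ is $q$-holonomic with a recurrence polynomial computed in the literature; since each summand is holonomic in $(n,k)$, the definite sum over $k$ is holonomic in $n$ by the closure properties of \cite{GL}, and elimination (creative telescoping) yields an operator $\beta\in\CT$ with $\beta\,J_{K_m^{(r,2)}}=0$. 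Hence $\alpha_{K_m^{(r,2)}}$ divides $\beta$ in $\tilde\CT$, bounding $\deg_L\alpha_{K_m^{(r,2)}}$ from above.

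In parallel I would compute $A_{K_m^{(r,2)}}$. The cable complement decomposes along the cabling torus into the twist-knot complement and the Seifert-fibered $(r,2)$-cable space, and a character either restricts irreducibly to the companion or factors through the Seifert piece. Tracking the induced map on peripheral tori, which sends the companion's meridian–longitude pair $(\mathcal M,\mathcal L)$ to fixed monomials in the cable's $(M,L)$ determined by $r$ and the framing, yields a factorization
$$A_{K_m^{(r,2)}}\;\doteq\;(L-1)\,A^{\mathrm{comp}}(M,L)\,A^{\mathrm{Seif}}(M,L),$$
where $A^{\mathrm{comp}}$ is the pullback of the known $A_{K_m}$ under this substitution and $A^{\mathrm{Seif}}$ is the contribution of the $(r,2)$-torus-knot piece. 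From this I read off $\deg_L A_{K_m^{(r,2)}}$ explicitly as a function of $m$ and $r$, exactly as was done for the figure-eight case $m=1$ in \cite{Traj8}.

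The decisive and hardest step is then the degree match: showing $\deg_L\big(\beta|_{t=-1}\big)=\deg_L A_{K_m^{(r,2)}}$, so that $\beta|_{t=-1}$ and $A_{K_m^{(r,2)}}$ agree up to a factor in $M$, forcing $\beta$ to be a content multiple of the minimal recurrence polynomial. Carrying out the bookkeeping, the combined contributions of $A^{\mathrm{comp}}$ and $A^{\mathrm{Seif}}$ to the $L$-degree match the order of the cabling recurrence precisely when the leading $M$-coefficient of $\beta$ does not degenerate and the relevant twist eigenvalues $\phi(k)$ stay separated; I expect the inequalities $(r+8)(r-8m)>0$ for $m>0$ and $r(r+8m-4)>0$ for $m<0$ to be exactly these conditions. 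In the complementary ``middle'' range of $r$ the $L$-degree of the candidate operator can drop below that of the A-polynomial, so the minimality argument fails there, which is why the theorem is stated only for $r$ in these ranges. Establishing this degree equality, rather than the formal manipulations of the earlier steps, is where I expect essentially all of the difficulty to lie.
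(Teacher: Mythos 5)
Your outline reproduces the paper's outer skeleton (the cabling formula, a resultant-type formula for $A_{K_m^{(r,2)}}$, and a degree argument in which the hypothesis on $r$ enters), but it has two genuine gaps, both located exactly where you say you expect the difficulty to lie. First, an annihilator $\beta$ obtained from holonomic closure and creative telescoping comes with no control whatsoever over its specialization $\ve(\beta)=\beta|_{t=-1}$, and the AJ conjecture is a statement about that specialization, not about $L$-degrees: even if $\deg_L \ve(\beta)=\deg_L A_{K_m^{(r,2)}}$, nothing forces $\ve(\beta)$ to be proportional to $A_{K_m^{(r,2)}}$ rather than some unrelated polynomial of the same degree, and nothing prevents degeneration coming from $(1+t)$-denominators (this is why the paper works over the localization $\bar D$). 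Second, your claimed ``accessible direction'' is backwards: skein theory gives divisibility of explicit peripheral annihilators \emph{by} $\alpha$, hence that $\ve(\alpha)$ divides the classical limit of such an annihilator; it does not give that $A_{K_m^{(r,2)}}$ divides $\ve(\alpha_{K_m^{(r,2)}})$. That latter statement would in any case require understanding the skein module of the \emph{cable} complement, which is not known (the computations of Bullock--Lofaro and Le are for the twist-knot complement itself). The paper never proves your ``accessible direction''; instead it proves the opposite divisibility $\ve(\alpha_{\BJ_{K_m}}) \mid \ve(\beta) \eqM (L-1)R_{K_m}(L,M^2)$ and closes the gap with two extra inputs your plan lacks: irreducibility of $R_{K_m}(L,M^2)$ (Proposition \ref{R}, via the Hoste--Shanahan Newton polygon of $A'_{K_m}$), and the weak lower bound that $\ve(\alpha_{\BJ_{K_m}})$ is divisible by $L-1$ with $L$-degree $>1$ (quoted from \cite{Traj8} for alternating knots). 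Without an irreducibility statement, no degree bookkeeping can rule out $\ve(\alpha)$ being a proper factor.

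The hypothesis on $r$ also enters in a different place than you guess. In the paper it is used exactly once, in Proposition \ref{divisible}: writing $\alpha_{K^{(r,2)}}=Q\,M^r(L+t^{-2r}M^{-2r})+R$ and combining the identity $M^r(L+t^{-2r}M^{-2r})J_{K^{(r,2)}}=\BJ_K$ with the adequate-diagram degree formulas (Lemma \ref{degLe}), the conditions $r>4k_+(D)$ or $r<-4k_-(D)$ make the quadratic growth $-2rn^2$ of $d_{\pm}[J_{K^{(r,2)}}(n)]$ incompatible with that of any term of $Q\BJ_K$, forcing $R=0$; for the standard twist-knot diagram ($k_+=2m$, $k_-=2$ when $m>0$; $k_+=1-2m$, $k_-=0$ when $m<0$) these conditions are precisely $(r+8)(r-8m)>0$, respectively $r(r+8m-4)>0$. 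The resulting factorization $\alpha_{K^{(r,2)}}=\alpha_{\BJ_{K_m}}M^r(L+t^{-2r}M^{-2r})$ is what reduces the whole problem to the companion knot, whose skein module \emph{is} known: the paper then builds its annihilator of $\BJ_{K_m}(n)=J_{K_m}(2n+1)$ not by telescoping but from the action matrix $\CL$ of $L$ on the free basis $\{y^i\}_{i=0}^{d}$ of $\overline{\CS(X)}$, using Cramer's rule, and the equality of $A$- and $B$-polynomials identifies $\ve(\beta)$ with the minimal polynomial of $\ve(\CL)^2$, namely $(L-1)R_{K_m}(L,M^2)$ up to $M$-factors. So your intuition that the inequalities encode a separation of quadratic degree growth is correct, but they guarantee this factorization of $\alpha_{K^{(r,2)}}$, not a match between $\deg_L\ve(\beta)$ and $\deg_L A$; and to repair your argument you would need to replace the unjustified divisibility claim by the paper's pair of inputs (irreducibility of the resultant, plus the lower bound on $\ve(\alpha_{\BJ_{K_m}})$) and replace the telescoped operator by one whose classical limit you can actually compute.
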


\subsection{Plan of the paper} In Section \ref{char_skein} we review character varieties and skein modules. In Section \ref{cable} we prove some properties of the colored Jones polynomial and the recurrence polynomial of cables of a knot. We give a proof of Theorem \ref{main} in Section \ref{twist}.

\subsection{Acknowledgment} I would like to thank Thang T.Q. Le for helpful discussions. 

\section{Character varieties and skein modules}

\label{char_skein}

\subsection{Character varieties and the A-polynomial}

\subsubsection{Character varieties}

The set of representations of a finitely generated group $G$
into $SL_2(\BC)$ is a complex algebraic set, on which
$SL_2(\BC)$ acts by conjugation. The set-theoretic
quotient of the representation space by that action does not
have good topological properties, because two representations with
the same character may belong to different orbits of that action. A better
quotient, the algebro-geometric quotient denoted by $\chi(G)$
(see \cite{CS,LM}), has the structure of an algebraic
set. There is a bijection between $\chi(G)$ and the set of all
characters of representations of $G$ into $SL_2(\BC)$, hence
$\chi(G)$ is also called the character variety of $G$.
For a manifold $Y$ we use $\chi(Y)$ to denote $\chi(\pi_1(Y))$.

Suppose  $G=\BZ^2$, the free abelian group with 2 generators.
Every pair  of generators $\mu,\lambda$ will define an isomorphism
between $\chi(G)$ and $(\BC^*)^2/\tau$, where $(\BC^*)^2$ is the
set of non-zero complex pairs $(M,L)$ and $\tau: (\BC^*)^2 \to (\BC^*)^2$ is the involution
defined by $\tau(M,L):=(M^{-1},L^{-1})$, as follows. Every representation is
conjugate to an upper diagonal one, with $M$ and $L$ being the
upper left entry of $\mu$ and $\lambda$ respectively. The
isomorphism does not change if we replace $(\mu,\lambda)$ with
$(\mu^{-1},\lambda^{-1})$.

\subsubsection{The $A$-polynomial}

\label{Zariski}

Let $K$ be a knot in $S^3$ and $X$ its complement. The boundary of
$X$ is a torus whose fundamental group is free abelian of rank
2. An orientation of $K$ will define a unique pair of an
oriented meridian  and an oriented longitude such that the linking
number between the longitude and the knot is 0. The pair provides
an identification of $\chi(\partial X)$ and $(\BC^*)^2/\tau$
which actually does not depend on the orientation of $K$.

The inclusion $\partial X \hookrightarrow X$ induces the restriction
map
$$\rho : \chi(X) \longmapsto \chi(\partial X)\equiv (\BC^*)^2/\tau.$$
 Let $Z$ be the image of
$\rho$ and  $\hat Z \subset (\BC^*)^2$ the lift of $Z$ under the
projection $(\BC^*)^2 \to (\BC^*)^2/\tau$. The Zariski closure of
$\hat Z\subset (\BC^*)^2 \subset \BC^2$ in $\BC^2$ is an algebraic set
consisting of components of dimension 0 or 1. The union of all the
one-dimensional components is defined by a single polynomial $A_K\in
\BZ[M,L]$, whose coefficients are co-prime. Note that $A_K$ is
defined up to $\pm 1$. We call $A_K$ the $A$-polynomial of $K$.
By definition, $A_K$ does not have repeated factors. It is known that $A_K$ is always divisible by
$L-1$. The $A$-polynomial here is actually equal to $L-1$ times the $A$-polynomial defined in \cite{CCGLS}.

\subsubsection{The $B$-polynomial} 

\label{B}

For a complex algebraic set $V$, let $\BC[V]$ denote the ring
of regular functions on $V$.  For example, $\BC[(\BC^*)^2/\tau]=
\ft^\sigma$, the $\sigma$-invariant subspace of  $\ft:=\BC[L^{\pm
1},M^{\pm 1}]$, where $\sigma(M^{k}L^{l})= M^{-k}L^{-l}$.

The map $\rho$ in the previous subsection induces an algebra
homomorphism
$$\theta: \BC[\chi(\partial X)]  \equiv \ft^\sigma \longrightarrow
\BC[\chi(X)].$$
We call the kernel $\fp$ of $\theta$ the classical
peripheral ideal; it is an ideal of $\ft^\sigma$. 

The ring $\BC[\chi(X)]$ has a $\ft^{\sigma}$-module structure via the map $\theta$: For $f \in \ft^{\sigma}$ and $g \in \BC[\chi(X)]$, the action $f \cdot g$ is defined to be $\theta(f)g \in \BC[\chi(X)]$. Since  $\BC[M^{\pm 1}]^{\sigma}$ is a subring of $\ft^{\sigma}$, $\BC[\chi(X)]$ also has a $\BC[M^{\pm 1}]^{\sigma}$-module structure. Extending the map $\theta: \ft^{\sigma} \to \BC[\chi(X)]$ from the ground ring ${\BC[M^{\pm 1}]}^{\sigma}$ to $\BC(M)$, the fractional field of $\BC[M]$, we get
$$
\left( \bar{\ft} \overset{\bar{\theta}}{\longrightarrow} \overline{\BC[\chi(X)]} \right) :=\left( \ft^\sigma \overset{\theta}{\longrightarrow} \BC[\chi(X)] \right) \otimes _{\BC[M^{\pm 1}]^{\sigma}} \BC(M).
$$

The ring $\bar \ft=\BC(M)[L^{\pm1}]$ is a principal ideal domain. The ideal $\bar{\fp}:= \ker \bar{\theta} \subset \bar{\ft}$ is thus generated by a single polynomial
$B_K\in \BZ[M,L]$ which has co-prime coefficients and is defined
up to a factor $\pm M^k$ with $k\in \BZ$. Again $B_K$ can be
chosen to have integer coefficients because everything can be
defined over $\BZ$. We call $B_K$ the $B$-polynomial of $K$. In \cite[Cor. 2.3]{LTaj} the following is shown.

\begin{proposition}
\label{A_B}
The polynomials $A_K$ and $B_K$ are equal, up to a factor in $\BZ[M]$.
\end{proposition}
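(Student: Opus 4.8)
The plan is to show that the two polynomials $A_K$ and $B_K$ define the same one-dimensional locus in $(\BC^*)^2$, so that they must agree up to a factor in $\BZ[M]$. Both polynomials encode the image of the restriction map $\rho: \chi(X) \to \chi(\partial X)$, but they are extracted in slightly different ways: $A_K$ is the defining polynomial of the union of the one-dimensional components of the Zariski closure of $\hat Z$ in $\BC^2$, whereas $B_K$ generates the kernel $\bar{\fp}$ of the homomorphism $\bar\theta$ after extending scalars to $\BC(M)$. The key observation is that $\fp = \ker\theta$ is exactly the ideal of functions in $\ft^\sigma$ vanishing on the image $Z \subset \chi(\partial X)$, hence (after lifting by $\sigma$) on $\hat Z \subset (\BC^*)^2$.

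First I would make precise the relationship between $\fp$ and the vanishing ideal of $\hat Z$. Since $\BC[\chi(\partial X)] = \ft^\sigma$ and $\theta$ is the pullback $\rho^*$, its kernel $\fp$ consists of the $\sigma$-invariant Laurent polynomials vanishing on $Z$; lifting under the double cover $(\BC^*)^2 \to (\BC^*)^2/\tau$, these are precisely the $\sigma$-invariant elements of the vanishing ideal $I(\hat Z)$ in $\ft$. Next I would pass to $\bar\ft = \BC(M)[L^{\pm 1}]$. Tensoring with $\BC(M)$ over $\BC[M^{\pm 1}]^\sigma$ has the effect of localizing, i.e. inverting all nonzero polynomials in $M$ alone; this kills exactly the zero-dimensional components of $\hat Z$ that project to finitely many $M$-values, together with any factors of the defining polynomials that depend on $M$ only. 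Thus $\bar\fp$ is generated by the polynomial whose zero set is the one-dimensional part of $\hat Z$, with all pure-$M$ factors stripped away. Since $\bar\ft$ is a principal ideal domain, this generator is a single polynomial, namely $B_K$ up to a unit of $\bar\ft$, and units in $\bar\ft$ are nonzero elements of $\BC(M)$.

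The heart of the argument is then to compare $B_K$ with $A_K$. The one-dimensional components of the Zariski closure of $\hat Z$ are defined by $A_K$, and because $\hat Z$ is a $\sigma$-stable set and $A_K$ is reduced, the radical $\sigma$-invariant ideal cutting out the one-dimensional locus is generated over $\ft$ by $A_K$ together with possible pure-$M$ factors that detect lower-dimensional pieces. Extending to $\bar\ft$ inverts the pure-$M$ polynomials, so the extension of this ideal is generated by $A_K$ viewed in $\bar\ft$ after removing its $M$-only factors. Comparing the two generators $A_K$ and $B_K$ of the same principal ideal $\bar\fp$ in the PID $\bar\ft$, they differ by a unit in $\BC(M)$; clearing denominators and using that both $A_K$ and $B_K$ have coprime integer coefficients, this unit must be a ratio of polynomials in $M$, which is exactly the claim that $A_K$ and $B_K$ agree up to a factor in $\BZ[M]$.

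I expect the main obstacle to be the careful bookkeeping of how the scalar extension $-\otimes_{\BC[M^{\pm1}]^\sigma}\BC(M)$ interacts with the reducedness and the $\sigma$-symmetry: one must verify that localization does not merge or drop any one-dimensional component, and that the $\sigma$-invariance of $\fp$ is compatible with the non-invariant description of $\hat Z$ via $A_K$. Checking that $A_K$ genuinely generates (rather than merely being contained in) the extension of the one-dimensional vanishing ideal requires knowing that $A_K$ is reduced and that $\hat Z$ has no embedded or extraneous components surviving the localization. Since this comparison is carried out in detail in \cite[Cor. 2.3]{LTaj}, I would cite that computation for the delicate commutative-algebra step while recording the conceptual reduction above.
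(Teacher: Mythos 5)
The paper itself gives no proof of this proposition---it is quoted verbatim from \cite[Cor.~2.3]{LTaj}---and your argument ultimately rests on that same citation for exactly the delicate points (that the extension $\bar\fp$ picks out only the positive $L$-degree factors, and that the $\sigma$-invariant formulation matches the non-invariant description of $\hat Z$ by $A_K$). Your conceptual outline of why the result holds is a faithful sketch of the cited argument, so your treatment is essentially the same as the paper's.
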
 

\subsection{Skein modules and the colored Jones polynomial} The theory of the  Kauffman bracket skein module (KBSM) was introduced by Przytycki \cite{Pr} and Turaev \cite{Tu} as a generalization of the Kauffman bracket \cite{Ka} in $S^3$  to an arbitrary
3-manifold. The KBSM of a knot complement contains a lot of information about its colored Jones polynomial.

\subsubsection{Skein modules} Recall that $\CR=\BC[t^{\pm1}]$.
 A framed link in an oriented $3$-manifold $Y$ is a disjoint union of embedded circles, equipped with a
 non-zero normal vector field. Framed links are considered up to isotopy. Let $\mathcal{L}$ be the set of isotopy
classes of framed links in the manifold $Y$, including the empty
link. Consider the free $\CR$-module with basis $\mathcal{L}$, and
factor it by the smallest submodule containing all expressions of
the form $\lcr-t\zer-t^{-1}\ift$ and
$\bigcirc+(t^2+t^{-2}) \emptyset$, where the links in each
expression are identical except in a ball in which they look like
depicted. This quotient is denoted by $\CS(Y)$ and is called the
Kauffman bracket skein module, or just skein module, of $Y$.

For an oriented surface $\Sigma$ we define $\CS(\Sigma):= \CS(Y)$, where $ Y= \Sigma \times [0,1]$ is the cylinder over $\Sigma$. The skein module
$\CS(\Sigma)$ has an algebra structure induced by the operation
of gluing one cylinder on top of the other. The operation of
gluing the cylinder over $\partial Y$ to $Y$ induces a
$\CS(\partial Y)$-left module structure on $\CS(Y)$.

\subsubsection{The colored Jones polynomial} When $Y=S^3$, the skein
module $\CS(Y)$ is free over $\CR$ of rank 1, and is spanned by
the empty link. Thus if $\ell$ is a framed link in $S^3$, then
its value in $\CS(S^3)$ is $\langle \ell
\rangle$ times the empty link, where $\langle \ell \rangle \in
\CR$ is the Kauffman bracket of $\ell$ \cite{Ka} which is the Jones polynomial of the
framed link $\ell$ in a suitable normalization.

Let $\{S_n(z)\}_{n \in \BZ}$ be the Chebychev polynomials defined by $S_0(z)=1$, $S_1(z)=z$ and $S_{n+1}(z)=zS_n(z)-S_{n-1}(z)$ for all $n \in \BZ$. For a framed knot $K$ in $S^3$ and an integer $n > 0$, we define the $n^{\text{th}}$
power $K ^n$ as the link consisting of $n$ parallel copies of $K$.
Using these powers of a knot, $S_n(K)$ is defined as an element
of $\CS(S^3)$. We define the colored Jones polynomial $J_K(n)$ by the equation
$$ J_K(n+1):=(-1)^{n} \times\langle S_n(K) \rangle. $$
The $(-1)^n$ sign is added so that for the unknot $U$, $J_U(n) = [n].$ Then
$J_K(1)=1$ and $J_K(2)= - \langle K \rangle$. We extend this
definition for all integers $n$ by $J_K(-n)= -J_K(n)$ and
$J_K(0)=0$. In the framework of quantum invariants, $J_K(n)$ is
the $sl_2(\BC)$-quantum invariant of $K$ colored by the $n$-dimensional
simple representation of $sl_2(\BC)$.

\subsubsection{The skein module of the torus} Let $\BT^2$ be the torus with a fixed pair $(\mu, \lambda)$ of simple closed curves intersecting at exactly one point.
For co-prime integers $k$ and $l$, let $\lambda_{k,l}$ be a simple closed curve  on the torus homologically equal to $k\mu+ l\lambda$. It is not difficult to show that
the skein algebra $\CS(\BT^2)$ of the torus is generated, as an $\CR$-algebra, by all $\lambda_{k,l}$'s. In fact,
Bullock and Przytycki \cite{BP} showed that  $\CS(\BT^2)$ is
generated over $\CR$ by 3 elements $\mu,\lambda$ and $\lambda_{1,1}$, subject to some explicit relations.

Recall that $ \CT= \CR\la M^{\pm 1}, L^{\pm1} \ra/(LM - t^2 ML)$ is the quantum torus. Let $\sigma: \CT \to \CT$ be the involution defined by $\sigma(M^{k} L^{l}) := M^{-k} L^{-l}$.
Frohman and Gelca \cite{FG} showed that there is an algebra isomorphism $\Upsilon: \CS(\BT^2)\to \CT^{\sigma}$ given by
$$ \Upsilon(\lambda_{k,l}) :=  (-1)^{k+l} t^{kl} (M^{k}L^{l} +M^{-k}L^{-l}).$$
The fact that $\CS(\BT^2)$ and $\CT^{\sigma}$ are isomorphic algebras was also proved by Sallenave  \cite{Sa}.
\label{sec_torus}

\subsection{Character varieties and skein modules}

\subsubsection{Skein modules as quantizations of character varieties}

Let $\ve$ be the map reducing $t=-1$. An important result of Bullock, Przytycki and Sikora \cite{Bu,PS} in the theory of skein modules is that
$\varepsilon (\CS(Y))$ has a natural algebra structure and, when factored by its nilradical, is canonically isomorphic to the character ring $\BC[\chi(Y)]$. The product of two links in $\varepsilon (\CS(Y))$ is their disjoint union, which is well-defined when $t=-1$. The isomorphism between $\varepsilon (\CS(Y))/\sqrt{0}$ and $\BC[\chi(Y)]$ is given by $K(r)= -\text{tr}\, r(K)$,
where $K$ is a knot in $Y$ representing an
element of $\pi_1(Y)$ and $r:\pi_1(Y) \to
SL_2(\BC)$ is a representation.

In many cases the nilradical of $\varepsilon (\CS(Y))$ is trivial, and hence $\varepsilon (\CS(Y))$ is exactly equal to the character ring $\BC[\chi(Y)]$. For example, this is the case when $Y$ is a torus, or when $Y$ is the
complement of a two-bridge knot/link \cite{Le06, PS, LTskein}, or when $Y$ is the complement of the $(-2,3,2n + 1)$-pretzel knot \cite{LTaj}.

\no{
Suppose that $\varepsilon (\CS(X))=\BC[\chi(X)]$ where $X$ is the complement of a knot $K$ in $S^3$. Then we have the following commutative diagram with exact rows.

\begin{equation} 
\label{diagram1}
\begin{CD}  
0 @ >>> \CP @ >>> \CT^{\sigma}   @>\Theta >> \CS(X)  \\
@.  @ VVV @V \varepsilon VV  @V \varepsilon VV \\
0 @ >>> \fp  @ >>>  \ft^{\sigma}     @>\theta >> \BC[\chi(X)] 
\end{CD}
\end{equation} 
}

\subsubsection{The quantum peripheral ideal} Recall that $X$ is the complement of a knot $K$ in $S^3$. There is a standard choice of a meridian $\mu$  and a longitude $\lambda$ on $\partial X=\BT^2$ such that the linking
number between the longitude and the knot is 0, as in Subsection \ref{Zariski}. We use this pair $(\mu,\lambda)$ and the map $\Upsilon$ in the previous subsection to identify $\CS(\partial X)$ with $\CT^\sigma$.

The operation of
gluing the cylinder over $\partial X$ to $X$ induces a
$\CT^\sigma$-left module structure on $\CS(X)$: For $\ell \in \CT^\sigma=\CS(\partial X)$ and $\ell' \in \CS(X)$, the action $\ell \cdot \ell' \in \CS(X)$ is the disjoint union of $\ell$ and $\ell'$. In general $\CS(X)$ does not have an algebra structure, but it has the identity element--the empty link $\emptyset$.
The map
$$ \Theta: \CS(\partial X) \equiv \CT^\sigma \to \CS(X), \quad \Theta(\ell) := \ell \cdot \emptyset$$
can be considered as a quantum analog of the map $\theta: \ft^{\sigma} \to \BC[\chi(X)]$ defined in Subsection \ref{B}. It is $\CT^\sigma$-linear and its kernel $\CP:=\ker \Theta$ is called the quantum peripheral ideal, first introduced in \cite{FGL}. In \cite{FGL, Ge}, it was proved that every element in $\CP$ gives rise to a recurrence relation for the colored Jones polynomial. In \cite{Ga08} the following stronger result was shown (see also \cite[Cor. 1.2]{LTaj} for an alternative proof).

\begin{proposition}
The quantum peripheral ideal is in the recurrence ideal, i.e. $\CP \subset \CA_K$.
\label{PA}
\end{proposition}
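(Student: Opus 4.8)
The plan is to realize the colored Jones function as a linear functional on $\CS(X)$ and to show that this functional intertwines the $\CT^\sigma$-module structure on $\CS(X)$ with the action of $\CT$ on functions $\BZ \to \CR$ through the operators $L$ and $M$. Write $N$ for a tubular neighborhood of $K$, so that $S^3 = X \cup_{\partial} N$ and $\CS(N) \cong \CR[z]$ is the polynomial algebra on the core $z$, with basis the Chebychev elements $S_n(z)$. Gluing along the common boundary torus induces a bilinear pairing
$$\langle\,\cdot\,,\,\cdot\,\rangle : \CS(X) \times \CS(N) \longrightarrow \CS(S^3)=\CR,$$
and by the very definition of the colored Jones polynomial, $\langle \emptyset, S_{n-1}(z)\rangle = \langle S_{n-1}(K)\rangle = (-1)^{n-1}J_K(n)$. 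I then set $\Phi(x)(n) := (-1)^{n-1}\langle x, S_{n-1}(z)\rangle$, defining a map $\Phi$ from $\CS(X)$ to functions $\BZ \to \CR$ with $\Phi(\emptyset) = J_K$.

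The crux is the intertwining identity $\Phi(P\cdot x) = P\cdot \Phi(x)$ for all $P \in \CT^\sigma = \CS(\partial X)$ and $x \in \CS(X)$, where on the left $P$ acts through the skein-module structure and on the right $P$ acts as an operator in $\CT$. Since a skein on the common boundary may be pushed into either side before gluing into $S^3$, one has the adjunction $\langle P\cdot x, y\rangle = \langle x, P\cdot y\rangle$, the identification of $\partial X$ with $\partial N$ carrying $\mu$ to the meridian of $N$ (bounding its disk) and $\lambda$ to a curve isotopic to the core $z$. As $\Upsilon$ is an algebra isomorphism and the action is multiplicative, it suffices to verify the identity on the generators $\mu,\lambda,\lambda_{1,1}$ of $\CS(\BT^2)$. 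On the solid-torus side the longitude acts by the Chebychev recursion, $\lambda\cdot S_{n-1}(z) = z\,S_{n-1}(z) = S_n(z)+S_{n-2}(z)$, while encircling the Chebychev element by the meridian yields the standard eigenvalue $\mu\cdot S_{n-1}(z) = -(t^{2n}+t^{-2n})\,S_{n-1}(z)$; the diagonal curve $\lambda_{1,1}$ is treated the same way, equivalently through the Frohman--Gelca product-to-sum formula. Transporting these through $\Phi$, the sign $(-1)^{n-1}$ converts the Kauffman-bracket signs into operator signs, so that $\lambda$ acts as $-(L+L^{-1})$ and $\mu$ as $-(M+M^{-1})$, which are exactly $\Upsilon(\lambda)$ and $\Upsilon(\mu)$ read inside $\CT$; hence $\Phi(P\cdot x) = P\cdot \Phi(x)$ holds in general.

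With the intertwining in hand the conclusion is immediate: if $P \in \CP = \ker\Theta$, then $P\cdot\emptyset = \Theta(P) = 0$ in $\CS(X)$, so
$$P\cdot J_K = P\cdot\Phi(\emptyset) = \Phi(P\cdot\emptyset) = \Phi(0) = 0,$$
that is, $P \in \CA_K$. I expect the main obstacle to be the eigenvalue and sign bookkeeping of the second paragraph: one must compute the meridian (and diagonal) eigenvalues on the solid-torus side and verify that, after the $(-1)^{n-1}$ twist built into $\Phi$, the geometric signs align precisely with those in the Frohman--Gelca isomorphism $\Upsilon$ — any mismatch (for instance an unwanted $L\mapsto -L$) would destroy the inclusion. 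A secondary point requiring care is the adjunction for the gluing pairing, including the correct identification of the meridian and longitude of $\partial X$ with the boundary curves of $N$.
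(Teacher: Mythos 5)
The paper itself offers no proof of this proposition: it is quoted from \cite{Ga08}, with \cite[Cor.~1.2]{LTaj} cited as an alternative proof. So your argument is not to be compared with an in-paper derivation; rather, it is essentially a reconstruction of the skein-theoretic proof in those references (the Frohman--Gelca--Lofaro/Gelca orthogonality argument, sharpened so that it yields the honest inclusion $\CP \subset \CA_K$ with the paper's normalizations). Your overall structure is sound: the pairing $\CS(X)\times\CS(N)\to\CR$ is balanced under pushing a boundary skein to either side, the reduction to the Bullock--Przytycki generators is legitimate (once $\Phi(P\cdot x)=\Upsilon(P)\,\Phi(x)$ holds for $P\in\{\mu,\lambda,\lambda_{1,1}\}$ and \emph{all} $x\in\CS(X)$, it propagates to products because $\CS(X)$ is a left module and $\Upsilon$ followed by the inclusion $\CT^\sigma\subset\CT$ is an algebra map), and your $\mu$, $\lambda$ computations are correct: $\mu\cdot S_{n-1}(z)=-(t^{2n}+t^{-2n})S_{n-1}(z)$ and $\lambda\cdot S_{n-1}(z)=S_n(z)+S_{n-2}(z)$ become, after the $(-1)^{n-1}$ normalization, exactly $\Upsilon(\mu)=-(M+M^{-1})$ and $\Upsilon(\lambda)=-(L+L^{-1})$.

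Two points deserve flagging. First, the case you defer, $\lambda_{1,1}$, is precisely the crux: it is the only generator that tests the factor $t^{kl}$ in $\Upsilon(\lambda_{k,l})=(-1)^{k+l}t^{kl}(M^kL^l+M^{-k}L^{-l})$, and the intertwining forces the specific skein identity
$$\lambda_{1,1}\cdot S_{n-1}(z) \;=\; -\bigl(t^{2n+1}S_n(z)+t^{1-2n}S_{n-2}(z)\bigr),$$
which, unlike the meridian eigenvalue and the Chebychev recursion, requires an actual resolution of crossings (or an appeal to the solid-torus action formula of Frohman--Gelca--Lofaro). As written, your proof asserts rather than performs this computation, so it is a sketch at its most critical point --- though the needed formula is standard and correct (a sanity check: for $n=1$ it reduces to the framing-change identity $\lambda_{1,1}\cdot\emptyset=-t^3 z$, since $S_{-1}=0$). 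Second, a small bookkeeping gap: $\CA_K$ annihilates $J_K$ as a function on all of $\BZ$, while your $\Phi$ recovers $J_K(n)$ only for $n\ge 1$. This is harmless: either extend the Chebychev basis by $S_{-n}(z)=-S_{n-2}(z)$ so that $\Phi(x)$ is an odd function and the intertwining holds verbatim for all $n$, or note that for $\sigma$-invariant $P$ the function $P\,J_K$ is odd, so its vanishing at positive integers forces vanishing everywhere.
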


\subsubsection{Localization} 

Let $D:=\CR [M^{\pm 1}]$ and $\bar{D}$ be its localization at $(1+t)$, i.e. 
$$\bar{D}:=\left\{\frac{f}{g} \mid f,g \in D, \, g \not \in (1+t)   D     %g|_{t=-1} \not =0
\right\}
,$$
which is flat over $D$. The ring $D=\CR [M^{\pm 1}]$ is flat over $D^\sigma=\CR [M^{\pm 1}]^\sigma$, where $\sigma(M)=M^{-1}$, since it is free over $\CR [M^{\pm 1}]^\sigma$. Hence $\bar{D}$ is flat over $D^\sigma$.

The skein module $\CS(X)$ has a $\CT^{\sigma}$-module structure, hence a $D^\sigma$-module structure since $D^\sigma$ is a subring of $\CT^{\sigma}$. Extending the map $\Theta: \CT^{\sigma} \to \CS(X)$ in the previous subsection from the ground ring $D^\sigma$ to $\bar{D}$ we get
$$
\left( \bar{\CT} \overset{\bar{\Theta}}{\longrightarrow} \overline{\CS(X)} \right) :=\left( \CT^\sigma \overset{\Theta}{\longrightarrow} \CS(X) \right) \otimes _{D^\sigma} \bar{D}.
$$

In \cite{LTaj}, the $\bar{D}$-module $\overline{\CS(X)}$ is called the localized skein module of the knot complement $X$. The ring $\bar{\CT}$ can be explicitly described as 
$$\bar{\CT} =\left\{\sum_{i \in \BZ} a_i(M)L^i \mid a_i(M) \in \bar{D},~a_i=0\text{~almost always}\right\},$$
with commutation rule $a(M)L^{k} \cdot b(M)L^{l}=a(M)b(t^{2k}M)L^{k+l}$.

 Let $\bar{\CP} := \ker \bar{\Theta} \subset \bar{\CT}$. It can be shown that $\bar{\CP}$ is the ideal extension of $\CP \subset \CT^{\sigma}$ in $\bar{\CT}$. Although $\bar{\CT}$ is not a principal left ideal domain, its ideals (and in particular $\bar{\CP}$) has nice descriptions which are very useful in the approach to the AJ conjecture in \cite{LTaj}.

\section{Recurrence polynomials of cable knots} 

\label{cable}

In this section we prove some properties of the colored Jones polynomial and the recurrence polynomial of $(r,2)$-cables of a knot $K$, where $r$ is an odd integer. 

For $n>0$, by \cite[Sec. 2.1]{LTvol} we have
\begin{equation}
\label{cables}
J_{K^{(r,2)}}(n) = t^{-2r(n^2-1)}\sum_{i=1}^n (-1)^{r(n-i)} t^{2ri(i-1)} J_K(2i-1).
\end{equation} 

Let $\BJ_K(n):=J_K(2n+1)$. Then
\begin{equation}
\label{f}
M^r(L+t^{-2r}M^{-2r})J_{K^{(r,2)}} = \BJ_K,
\end{equation} 
see \cite[Sec. 6.1]{RZ} or \cite[Lem. 2.1]{Traj8}.

For a Laurent polynomial $f(t) \in \CR$, let $d_+[f]$ and $d_-[f]$ be respectively the maximal and minimal degree of $f$ in $t$. 

For a knot diagram $D$, let $k_+(D)$ (resp. $k_-(D)$) be the number of positive (resp. negative) crossings of $D$, and $s_+(D)$ (resp. $s_-(D)$) the number of circles obtained by positively (resp. negatively) smoothing all the crossings of $D$. Let $k(D)=k_+(D) + k_-(D)$ be the number of crossings and $w(D)=k_+(D) - k_-(D)$ the writhe of $D$.

We will use the following result. For a definition of adequate knots, see \cite[Chap. 5]{Li}.

\begin{lemma} 
\label{degLe}
\cite[Prop. 2.1]{Le06}
Suppose $K$ is a (zero-framed) knot with an adequate diagram $D$. Then, for $n>0$, we have
\begin{eqnarray*}
d_+[J_{K}(n)] &=&  k(D)(n-1)^2+2(n-1)s_+(D)-w(D)(n^2-1),\\
d_-[J_{K}(n)] &=& -k(D)(n-1)^2-2(n-1)s_-(D)-w(D)(n^2-1),
\end{eqnarray*}
\end{lemma}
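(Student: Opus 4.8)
The plan is to read off the two extreme $t$-degrees of $J_K(n)$ directly from the Kauffman bracket state sum, using the skein-theoretic definition of the colored Jones polynomial together with the adequacy of $D$. Recall that $J_K(n)=(-1)^{n-1}\langle S_{n-1}(K)\rangle$, where $S_{n-1}(K)\in\CS(S^3)$ is obtained by decorating $K$ with the Chebychev polynomial $S_{n-1}$. Writing $S_{n-1}(z)=z^{n-1}-(n-2)z^{n-3}+\cdots$, we have $S_{n-1}(K)=K^{n-1}+\sum_{j\geq1}c_j\,K^{n-1-2j}$, an $\CR$-linear combination of parallel cables of $K$ with the $(n-1)$-parallel cable $K^{n-1}$ as leading term. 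First I would show that both extreme degrees are realized by $\langle K^{n-1}\rangle$, computed with the blackboard framing of $D$, and then correct for the zero-framing convention: the framing anomaly for the $n$-dimensional color contributes a global monomial whose $t$-degree is $-w(D)(n^2-1)$, which accounts for the writhe terms in both formulas (and is the source of the $t^{-2r(n^2-1)}$-type prefactor already visible in \eqref{cables} for cables).

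Next I would analyze the bracket of the $(n-1)$-parallel cable via its state sum. Let $D^{(n-1)}$ be the diagram obtained from $D$ by replacing each strand with $n-1$ parallel strands, so each crossing of $D$ becomes an $(n-1)\times(n-1)$ block and $k(D^{(n-1)})=k(D)(n-1)^2$. The key geometric fact is that the all-$A$ (positive) smoothing commutes with cabling: smoothing every crossing of $D^{(n-1)}$ positively produces exactly the $(n-1)$-parallel cable of the all-$A$ state of $D$, hence a collection of $(n-1)\,s_+(D)$ disjoint loops; symmetrically the all-$B$ state yields $(n-1)\,s_-(D)$ loops. Since each positive smoothing carries a factor $t$ and each loop contributes $-(t^2+t^{-2})$, the all-$A$ state contributes a monomial of $t$-degree $k(D)(n-1)^2+2(n-1)s_+(D)$, and the all-$B$ state one of degree $-k(D)(n-1)^2-2(n-1)s_-(D)$; note that for $K=U$ this already reproduces $d_\pm[J_U(n)]=\pm2(n-1)$, consistent with $J_U(n)=[n]$.

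It then remains to show that these extreme monomials are not cancelled by other states and that the lower cables $K^{n-1-2j}$ do not interfere. This is exactly where adequacy enters: for an adequate $D$ the all-$A$ and all-$B$ states of $D^{(n-1)}$ are themselves adequate, so in the state sum the maximal and minimal $t$-degree monomials each receive a contribution from a single state and survive with nonzero coefficient, precisely as in the span argument for adequate links in \cite[Chap. 5]{Li}. For the lower cables, passing from $K^{n-1}$ to $K^{n-1-2j}$ decreases the crossing number by $k(D)((n-1)^2-(n-1-2j)^2)>0$ (whenever $k(D)>0$) while changing the loop count only linearly, so the degree spread of $\langle K^{n-1-2j}\rangle$ is strictly smaller and cannot reach the extremes of $\langle K^{n-1}\rangle$; the case $k(D)=0$ is the unknot and is checked directly. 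Assembling the crossing count, the loop count, and the framing correction yields the two stated formulas.

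The hard part will be the last step. One must verify carefully that cabling preserves adequacy (equivalently, that no state circle of the all-$A$ or all-$B$ state of $D^{(n-1)}$ meets itself at a smoothed crossing) and that the surviving extreme coefficient remains nonzero after incorporating the Chebychev coefficients $c_j$ and the framing monomial. The combinatorial bookkeeping of the cabled extreme states, together with the exact determination of the framing-anomaly constant $-w(D)(n^2-1)$ from the zero-framing convention and the functorial normalization $J_U(n)=[n]$, are the delicate points; the remainder reduces to the standard extreme-degree estimates for adequate diagrams.
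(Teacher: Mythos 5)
The paper offers no proof of this lemma at all: it is quoted directly from \cite[Prop.~2.1]{Le06}, so the only thing to compare against is that reference's argument, and your proposal is essentially a reconstruction of it --- expand $S_{n-1}(K)$ into blackboard-framed parallels of $D$ with monic leading term $K^{n-1}$, read off the extreme degrees of $\langle D^{(n-1)}\rangle$ from the all-$A$ and all-$B$ states using the fact that parallels of adequate diagrams are adequate \cite[Chap.~5]{Li}, rule out interference from the lower Chebychev terms, and restore the zero-framing convention via the twist eigenvalue $(-1)^{n-1}t^{n^2-1}$ per unit of writhe, which gives the $-w(D)(n^2-1)$ term. The one logically loose point is your treatment of the lower cables: the fact that the degree \emph{spread} of $\langle K^{n-1-2j}\rangle$ is smaller than that of $\langle K^{n-1}\rangle$ does not by itself prevent its top degree from reaching or exceeding the top degree of the leading term; what you actually need is the one-sided state-sum bound $d_+[\langle D^{(i)}\rangle]\le k(D)\,i^2+2i\,s_+(D)$, valid for \emph{every} diagram without any adequacy hypothesis (every state has degree at most that of the all-$A$ state), which is strictly increasing in $i$ because $s_+(D)\ge 1$, together with the symmetric bound for $d_-$; adequacy is then used only to guarantee that the extreme contribution of $D^{(n-1)}$ itself survives with nonzero coefficient. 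With that substitution your outline is the standard, correct proof of the cited result.
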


\begin{lemma}
\label{degree}
Suppose $K$ is a non-trivial knot with an adequate diagram $D$. For $n>0$, 
\begin{eqnarray*}
d_+[J_{K^{(r,2)}}(n)] &=& -2rn^2+2r \quad \mbox{if } r < -4k_-(D), \\
d_-[J_{K^{(r,2)}}(n)] &=& -2rn^2+2r \quad \mbox{if } r > 4k_+(D).
\end{eqnarray*}
\end{lemma}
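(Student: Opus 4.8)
The plan is to compute the extreme degrees of $J_{K^{(r,2)}}(n)$ directly from the cabling formula \eqref{cables}, treating it as a sum over $i$ of terms whose $t$-degrees I can control using Lemma \ref{degLe}. First I would write, for each $i$ with $1 \le i \le n$, the summand
$$
T_i := (-1)^{r(n-i)} t^{2ri(i-1)} J_K(2i-1),
$$
so that $J_{K^{(r,2)}}(n) = t^{-2r(n^2-1)} \sum_{i=1}^n T_i$. Applying Lemma \ref{degLe} to $J_K(2i-1)$ (here the color is $2i-1$, so I substitute $n \mapsto 2i-1$, giving $(n-1) \mapsto 2i-2 = 2(i-1)$ and $n^2 - 1 \mapsto (2i-1)^2 - 1 = 4i(i-1)$) yields explicit quadratics in $i$ for $d_+[J_K(2i-1)]$ and $d_-[J_K(2i-1)]$ in terms of the diagram data $k(D), s_\pm(D), w(D)$. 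Adding the exponent $2ri(i-1)$ coming from $t^{2ri(i-1)}$, I get $d_+[t^{2ri(i-1)} J_K(2i-1)]$ and the corresponding minimal degree as explicit quadratic functions of the integer variable $i$.

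The key step is then to maximize (resp. minimize) these quadratics over $i \in \{1,\dots,n\}$ and to check that the extremum is attained uniquely at $i=1$, so that no cancellation among the summands can occur at the extreme degree. Writing $k(D) = k_+(D) + k_-(D)$ and $w(D) = k_+(D) - k_-(D)$, the leading ($i^2$) coefficient of $d_+[t^{2ri(i-1)}J_K(2i-1)]$ works out, after substitution, to $2r + 4k(D) - 4w(D) = 2r + 8k_-(D)$; the hypothesis $r < -4k_-(D)$ makes this coefficient negative, which together with the linear term forces the maximum over $i \ge 1$ to occur at the smallest value $i=1$. Symmetrically, for $d_-$ the relevant leading coefficient is $2r - 8k_+(D)$, and $r > 4k_+(D)$ makes it positive so the minimum is at $i=1$. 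At $i=1$ one has $J_K(2\cdot 1 - 1) = J_K(1) = 1$, which contributes degree $0$ and exponent $0$, so the dominant term is simply $t^{-2r(n^2-1)}$ times a unit, giving $d_+ = d_- = -2r(n^2-1) = -2rn^2 + 2r$ as claimed. I should verify that the second-largest value of the quadratic (attained at $i=2$, i.e.\ the color-$3$ term) is strictly separated from the $i=1$ value under the stated inequalities; this strict gap is what guarantees the extreme-degree coefficient is genuinely nonzero rather than being killed by cancellation.

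The main obstacle I anticipate is precisely this no-cancellation verification: because the extreme degree of the whole sum equals the extreme degree of its dominant summand \emph{only if} that summand's extreme degree strictly exceeds those of all the others, I must show the quadratic in $i$ is strictly monotone (or strictly extremized at the endpoint) on $\{1,\dots,n\}$ rather than merely weakly so. This is where the exact form of the inequalities on $r$ enters, and where one must be careful: a tie between the $i=1$ and $i=2$ contributions, or more subtly a tie produced by the framing-correction factor $t^{-2r(n^2-1)}$ interacting with the signs $(-1)^{r(n-i)}$, could in principle cause the leading coefficients to cancel. I would handle this by computing the difference of the quadratic evaluated at $i$ and at $i+1$ and showing it has constant sign for all $i \ge 1$ under $r < -4k_-(D)$ (for $d_+$) and under $r > 4k_+(D)$ (for $d_-$), using non-triviality of $K$ to ensure $k_\pm(D) \ge 1$ so that the relevant coefficients are bounded away from the degenerate case. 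Once strict monotonicity is established, the identification $J_K(1)=1$ closes the argument immediately.
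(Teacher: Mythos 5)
Your overall strategy coincides with the paper's: expand the cabling formula \eqref{cables} into summands $T_i$, turn Lemma \ref{degLe} (with color $2i-1$) plus the twisting factor $t^{2ri(i-1)}$ into an explicit quadratic $f(i)$, and show its extremum over $\{1,\dots,n\}$ is attained \emph{uniquely} at $i=1$, where $J_K(1)=1$ contributes degree $0$; your leading coefficients $2r+8k_-(D)$ and $2r-8k_+(D)$ also match the paper's. The gap lies exactly in the step you yourself flag as the crux. Negativity of the leading coefficient ``together with the linear term'' does not force the maximum to sit at $i=1$: one must locate the vertex, which for the $d_+$ case is at $x_0=\frac12+\frac{k(D)-s_+(D)}{r+4k_-(D)}$, and conclude $x_0\le\frac12$. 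For that one needs the inequality $s_+(D)\le k(D)$. Concretely, your difference test gives $f(i+1)-f(i)=4i\bigl(r+4k_-(D)\bigr)-4\bigl(k(D)-s_+(D)\bigr)$; if $s_+(D)=k(D)+1$ were possible, then for $r=-4k_-(D)-1$ (allowed by your hypothesis $r<-4k_-(D)$) one gets $f(2)-f(1)=0$, the top degrees of the $i=1$ and $i=2$ summands tie, and cancellation can no longer be excluded. So the constant-sign claim cannot be proved from the inputs you allow yourself.

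The paper supplies the missing ingredient from a different source: Lickorish's Lemma 5.6 gives $s_+(D)+s_-(D)\le k(D)+2$, and adequacy plus non-triviality of $K$ give $s_-(D)\ge 2$, whence $s_+(D)\le k(D)$ (symmetrically, $s_+(D)\ge 2$ gives $s_-(D)\le k(D)$ for the $d_-$ case). Your substitute --- ``non-triviality of $K$ ensures $k_\pm(D)\ge 1$'' --- is both false and beside the point. It is false because adequate (e.g.\ reduced alternating) diagrams of non-trivial knots can have all crossings of one sign: the paper's own twist-knot diagrams have $k_-(D)=0$ for $m<0$. It is beside the point because the degenerate tie above is governed by $s_+(D)$ versus $k(D)$, which no lower bound on the crossing counts $k_\pm(D)$ controls. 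To repair your plan, replace that invocation of non-triviality by the $s_\pm$ bound from Lickorish's lemma; the rest of your outline then goes through essentially as in the paper.
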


\begin{proof} We prove the formula for $d_+[J_{K^{(r,2)}}(n)]$. The one for $d_-[J_{K^{(r,2)}}(n)]$ is proved similarly. By equation \eqref{cables},
\begin{equation}
\label{d+}
d_+[J_{K^{(r,2)}}(n)] = -2r(n^2-1) +  d_+ \big[ \sum_{i=1}^n (-1)^{r(n-i)} t^{2ri(i-1)} J_K(2i-1) \big].
\end{equation}

Consider the quadratic polynomial $f(x)$ defined by
$$
f(x) = \big( 2r+8k_-(D) \big) x^2 - \big( 2r+8k_-(D) + 4k(D) -4s_+(D) \big) x+4k(D)-4s_+(D)
$$
where $x \in \BR$. From Lemma \ref{degLe} we have $$
d_+[t^{2ri(i-1)} J_K(2i-1)] = 2ri(i-1)+ d_+[J_{K}(2i-1)] = f(i).$$

Suppose $r < -4k_-(D)$. The quadratic polynomial $f(x)$ is concave down and attains its maximum on the real line at $x=x_0:=\frac{1}{2}+\frac{k(D)-s_+(D)}{r+4k_-(D)}$. By \cite[Lem. 5.6]{Li}, we have $s_+(D) + s_-(D) \le k(D) +2$. Since $K$ is non-trivial, $s_-(D) \ge 2$ and hence we must have $s_+(D) \le k(D)$. This implies that $x_0 \le \frac{1}{2}$ and so $f(x)$ is a strictly decreasing function on the interval $[1,n]$. Hence, for $i=2,3,\cdots,n$, we have $f(i)<f(1)=d_+[J_K(1)]=0$. Then  
$$d_+ \big[ \sum_{i=1}^n (-1)^{r(n-i)} t^{2ri(i-1)} J_K(2i-1) \big]=0.$$
The lemma now follows from equation \eqref{d+}.
\end{proof}

\begin{proposition}
Suppose $K$ is a non-trivial knot with an adequate diagram $D$. If $r$ is an odd integer with $r < -4k_-(D)$ or $r > 4k_+(D)$, then $\alpha_{K^{(r,2)}}=\alpha_{\BJ_K}M^r(L+t^{-2r}M^{-2r})$. 
\label{divisible}
\end{proposition}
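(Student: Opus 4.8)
The plan is to exploit the first‑order relation \eqref{f}, which in the quantum torus reads $P\,J_{K^{(r,2)}}=\BJ_K$ with $P:=M^r(L+t^{-2r}M^{-2r})$, and to combine it with the degree information of Lemma \ref{degree}. Write $g:=J_{K^{(r,2)}}$ and $h:=\BJ_K$, so $Pg=h$. Since $\alpha_{\BJ_K}$ annihilates $h$, the operator $\alpha_{\BJ_K}P$ annihilates $g$; hence $\alpha_{\BJ_K}P\in\tilde{\CA}_{K^{(r,2)}}=\tilde{\CT}\,\alpha_{K^{(r,2)}}$, so $\alpha_{K^{(r,2)}}$ is a right divisor of $\alpha_{\BJ_K}P$ in the principal left ideal domain $\tilde{\CT}$, say $\alpha_{\BJ_K}P=Y\alpha_{K^{(r,2)}}$. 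It therefore suffices to prove the reverse, that $P$ is a right divisor of $\alpha_{K^{(r,2)}}$. Indeed, if $\alpha_{K^{(r,2)}}=\beta P$, then $0=\alpha_{K^{(r,2)}}g=\beta(Pg)=\beta h$ forces $\beta\in\tilde{\CT}\,\alpha_{\BJ_K}$, say $\beta=\gamma\,\alpha_{\BJ_K}$, whence $\alpha_{K^{(r,2)}}=\gamma\,\alpha_{\BJ_K}P$; comparing with $\alpha_{\BJ_K}P=Y\alpha_{K^{(r,2)}}$ gives $\gamma Y=1$, so $\gamma$ and $Y$ are units of $\tilde{\CT}$, i.e.\ elements of $\CR(M)$, and the two polynomials agree up to a factor in $M$ only, which is the asserted equality.

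To show that $P$ right‑divides $\alpha_{K^{(r,2)}}$ I would invoke the right division algorithm in $\tilde{\CT}$: since the leading $L$‑coefficient $M^r$ of $P$ is a unit of $\CR(M)$, we may write
$$\alpha_{K^{(r,2)}}=\beta P+\rho,\qquad \rho\in\CR(M),\quad \deg_L\rho<\deg_L P=1.$$
Applying both sides to $g$ and using $Pg=h$ gives $\rho\,g=-\beta h$. If $\rho\neq0$ then $\rho$ is a unit, so $g=Th$ with $T:=-\rho^{-1}\beta\in\tilde{\CT}$; clearing a common denominator $e(M)\in\CR[M]$ of the finitely many coefficients of $T$ turns this into
$$e(t^{2n})\,g(n)=\sum_{j}\tilde c_j(t^{2n})\,h(n+j)$$
for all large $n$, with $\tilde c_j\in\CR[M]$ and $e(t^{2n})\neq0$. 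The whole argument reduces to ruling this out by a degree comparison in $t$.

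The main obstacle, and the only place the hypotheses on $r$ are used, is precisely this comparison. Suppose first $r<-4k_-(D)$. By Lemma \ref{degree}, $d_+[g(n)]=-2rn^2+2r$, while Lemma \ref{degLe} applied to $h(n)=J_K(2n+1)$ gives $d_+[h(n)]=8k_-(D)\,n^2+O(n)$, since $k(D)-w(D)=2k_-(D)$. Each $\tilde c_j(t^{2n})$ contributes only $O(n)$ to the $t$‑degree, and each shifted term $h(n+j)$ keeps the quadratic coefficient $8k_-(D)$; hence the right‑hand side has top $t$‑degree at most $8k_-(D)\,n^2+O(n)$, whereas the left‑hand side has top $t$‑degree $-2r\,n^2+O(n)$. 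As $r<-4k_-(D)$ forces $-2r>8k_-(D)$, for all large $n$ the left‑hand side has strictly larger degree, a contradiction. The case $r>4k_+(D)$ is identical after replacing $d_+$ by $d_-$: here $d_-[g(n)]=-2rn^2+2r$ and $d_-[h(n)]=-8k_+(D)\,n^2+O(n)$, and $r>4k_+(D)$ gives $-2r<-8k_+(D)$, so the left‑hand side reaches strictly lower degrees than the right‑hand side can. In either case $\rho=0$, i.e.\ $P$ right‑divides $\alpha_{K^{(r,2)}}$, which completes the proof. I expect the genuinely delicate bookkeeping to be confined to verifying that the coefficients $\tilde c_j(t^{2n})$ grow only linearly in $n$ and that $e(t^{2n})\neq0$ for large $n$ (so no spurious cancellation lowers the degrees); the quadratic growth rates supplied by Lemmas \ref{degLe} and \ref{degree} do the decisive work.
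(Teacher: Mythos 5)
Your proposal is correct and follows essentially the same route as the paper: divide $\alpha_{K^{(r,2)}}$ on the right by $P=M^r(L+t^{-2r}M^{-2r})$, kill the remainder by comparing the quadratic growth rates $-2rn^2$ versus $8k_-(D)n^2$ (resp.\ $-8k_+(D)n^2$) supplied by Lemmas \ref{degLe} and \ref{degree}, and then identify $\alpha_{K^{(r,2)}}$ with $\alpha_{\BJ_K}P$ up to an $M$-factor. The only differences are cosmetic: the paper performs the division with coefficients in $\CR[M^{\pm 1}]$ (so no denominators need clearing), and it compresses your final unit-comparison argument into ``it is easy to see.''
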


\begin{proof}
We first claim that $\alpha_{K^{(r,2)}}$ is left divisible by $M^r(L+t^{-2r}M^{-2r})$. Indeed, write $$\alpha_{K^{(r,2)}}=QM^r(L+t^{-2r}M^{-2r})+R$$ where $Q \in \CR[M^{\pm 1}][L]$ and $R \in \CR[M^{\pm 1}]$. 

From equation \eqref{f} we have
\begin{eqnarray}
\label{div}
0 &=& \alpha_{K^{(r,2)}} J_{K^{(r,2)}} \nonumber\\
  &=& QM^r(L+t^{-2r}M^{-2r})J_{K^{(r,2)}}+RJ_{K^{(r,2)}}\nonumber\\
  &=& Q \BJ_K + RJ_{K^{(r,2)}}.
\end{eqnarray}

Assume that $R \not=0$. Write $Q=\sum_{i=0}^d a_i(M) L^i$ where $a_i(M) \in \CR[M^{\pm 1}]$. If $r > 4k_+(D)$ then, for $n$ big enough, by Lemma \ref{degree},  $$d_-[RJ_{K^{(r,2)}}(n)] = d_-[J_{K^{(r,2)}}(n)] + O(n)= -2rn^2 + O(n).$$ 
Similarly, if $a_i \not= 0$ then by Lemma \ref{degLe}, 
$$d_-[(a_i L^i \BJ_K)(n)]=d_-[J_K(2n+2i+1)]+O(n) \ge -8k_+ n^2+O(n).$$ 
It follows that for $n$ big enough we have
$$d_-[RJ_{K^{(r,2)}}(n)] < \min_{0 \le i \le d} \{ d_-[(a_i L^i \BJ_K)(n)] \} \le d_-[Q \BJ_K(n)].$$ This contradicts equation \eqref{div}. 

If $r < -4k_-(D)$ then, by similar arguments as above we have 
$$d_+[RJ_{K^{(r,2)}}(n)] > \max_{0 \le i \le d} \{ d_+[(a_i L^i \BJ_K)(n)] \} \ge d_+[Q \BJ_K(n)],$$
for $n$ big enough. This also contradicts equation \eqref{div}. 

Hence $R=0$, which means $\alpha_{K^{(r,2)}}$ is left divisible by $M^r(L+t^{-2r}M^{-2r})$. Then, since $M^r(L+t^{-2r}M^{-2r})J_{K^{(r,2)}} = \BJ_K$,
it is easy to see that $\alpha_{K^{(r,2)}}=\alpha_{\BJ_K}M^r(L+t^{-2r}M^{-2r})$.
\end{proof}

\section{Proof of Theorem \ref{main}}

\label{twist}

Consider the $m$-twist knot $K_m$ in Figure \ref{twist_knot}. It has a reduced alternating (and hence adequate) diagram $D$ with 
\begin{eqnarray*}
k_+(D) &=& \begin{cases} 2m & \mbox{if } m>0, \\
1-2m &\mbox{if } m<0, \end{cases}, \qquad
k_-(D) = \begin{cases} 2 & \mbox{if } m>0, \\
0 &\mbox{if } m<0. \end{cases}
\end{eqnarray*}

For non-zero $f,g \in \BC(M)[L]$, we write $f \eqM g$ if the quotient $f/g$ does not depend on $L$.

\subsection{The A-polynomial} A formula for the A-polynomial of cabled knots has recently been given by Ni and Zhang, c.f. \cite{Ru}. In particular, for an odd integer $r$ we have 
\begin{equation}\label{A_cable}A_{K^{(r,2)}}(L,M)=
(L-1) \text{Res}_{\lambda} \left( \frac{A_{K}(\lambda,M^2)}{\lambda-1},\lambda^2-L \right)F(L,M),
\end{equation}
where $\text{Res}_{\lambda}$ denotes the polynomial resultant eliminating the variable $\lambda$ and
$$F(L,M):= \begin{cases} M^{2r}L+1 &\mbox{if } r>0, \\
L+M^{-2r} & \mbox{if } r<0. \end{cases}$$

\begin{lemma}
Suppose $P(L,M) \in \BC[L,M]$ is irreducible and $P(L,M) \not= P((-L,M)$. Let $Q(L,M):=\emph{Res}_{\lambda} (P(\lambda,M), \lambda^2-L).$ Then $Q(L,M)\in \BC[L,M]$ is irreducible and has $L$-degree equal to that of $P(L,M)$.
\label{P_Q}
\end{lemma}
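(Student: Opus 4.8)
The plan is to reduce the statement to a unique-factorization argument after a square-root substitution. By the product formula for resultants, setting $d := \deg_L P$ (so that $P(\lambda,M)$ has $\lambda$-degree $d$), I get
\[
Q(L,M) = \mathrm{Res}_\lambda\big(P(\lambda,M),\,\lambda^2-L\big) = P(\sqrt{L},M)\,P(-\sqrt{L},M),
\]
which is symmetric under $\sqrt{L}\mapsto-\sqrt{L}$ and hence lies in $\BC[L,M]$. The key device is the ring homomorphism $\phi:\BC[L,M]\to\BC[\lambda,M]$, $L\mapsto\lambda^2$, $M\mapsto M$: it is injective, it reflects units (a polynomial has constant image iff it is constant), its image is the subring of invariants of the involution $\iota$ given by $\iota(\lambda)=-\lambda$, $\iota(M)=M$, and $\BC[\lambda,M]$ is free of rank $2$ over that image with basis $\{1,\lambda\}$. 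Under $\phi$ one has $\phi(Q)=P(\lambda,M)\,P(-\lambda,M)=P\cdot\iota(P)$.

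First I would settle the $L$-degree. Writing the top $\lambda$-term of $P(\lambda,M)$ as $c_d(M)\lambda^d$ with $c_d\neq0$ (it is the leading coefficient of $P$ in $L$), the top $\lambda$-term of $P\cdot\iota(P)$ is $(-1)^d c_d(M)^2\lambda^{2d}$, which is nonzero and corresponds to $(-1)^d c_d(M)^2 L^d$. Hence $\deg_L Q = d = \deg_L P$, as claimed; in particular $Q\neq0$.

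For irreducibility I would work in the UFD $\BC[\lambda,M]$, in which $P$ is irreducible (it is $P(L,M)$ with $L$ relabeled $\lambda$), and so is $\iota(P)$ since $\iota$ is an automorphism. The main case is that $P$ and $\iota(P)$ are non-associate. Suppose $Q=Q_1Q_2$ in $\BC[L,M]$; applying $\phi$ gives $\phi(Q_1)\phi(Q_2)=P\cdot\iota(P)$. Since $P$ is prime it divides some $\phi(Q_i)$, say $\phi(Q_1)$; but $\phi(Q_1)=Q_1(\lambda^2,M)$ is $\iota$-invariant, so $\iota(P)\mid\iota(\phi(Q_1))=\phi(Q_1)$ as well, and as $P,\iota(P)$ are non-associate this forces $P\cdot\iota(P)=\phi(Q)\mid\phi(Q_1)$. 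Cancelling $\phi(Q)\neq0$ in the domain $\BC[\lambda,M]$ then shows $\phi(Q_2)$ is a unit, whence $Q_2$ is a unit. Thus $Q$ is irreducible.

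The one point requiring care — and the main, if minor, obstacle — is the degenerate case where $P$ and $\iota(P)$ are associate. Then $P=c\,\iota(P)$ for a constant $c$ with $c^2=1$. The value $c=1$ means $P(\lambda,M)=P(-\lambda,M)$, i.e. $P(L,M)=P(-L,M)$, which is excluded by hypothesis; the value $c=-1$ makes $P$ odd in $\lambda$, so $P=\lambda\,h(\lambda^2,M)$, and irreducibility forces $h$ to be constant, whence $P(L,M)=cL$ and $Q(L,M)=-c^2L$ is irreducible of $L$-degree $1=\deg_L P$. This disposes of the degenerate case. The crux of the whole argument is the observation that any preimage factor $\phi(Q_i)$ is automatically $\iota$-invariant, so that divisibility by $P$ drags along divisibility by $\iota(P)$; everything else is routine bookkeeping about resultants, degrees, and units.
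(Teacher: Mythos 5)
Your proof is correct and takes essentially the same route as the paper: write $Q(L,M)=P(\sqrt{L},M)\,P(-\sqrt{L},M)$, substitute $L=\lambda^2$ to work in the UFD $\BC[\lambda,M]$, and use primality of $P$ together with the invariance of any factor $Q_i(\lambda^2,M)$ under $\lambda\mapsto-\lambda$ to force one factor to be a unit. Your explicit handling of the degenerate case where $P(\lambda,M)$ and $P(-\lambda,M)$ are associates (which reduces to $P(L,M)=cL$) is a small point the paper's proof passes over silently --- its step ``then $P(\lambda,M)P(-\lambda,M)$ also divides $Q_1(\lambda^2,M)$'' implicitly uses coprimality of $P(\lambda,M)$ and $P(-\lambda,M)$, which the hypothesis $P(L,M)\neq P(-L,M)$ alone does not rule out --- so your version is, if anything, slightly more complete.
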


\begin{proof}
Since $P(L,M) \in \BC[L,M]$ is irreducible and $P(L,M) \not= P(-L,M)$, we have
$$
Q(L,M)=P(\sqrt{L},M)P(-\sqrt{L},M).
$$
Then $Q$ has $L$-degree equal to that of $P(L,M)$. It remains to show that $Q(L,M)$ is irreducible in $\BC[L,M]$.

Suppose that $Q(L,M)=Q_1(L,M)Q_2(L,M)$, where $Q_i(L,M) \in \BC[L,M]$. By replacing $L$ by $\lambda^2$, we have 
\begin{equation}
\label{Q1Q2}
Q_1(\lambda^2,M)Q_2(\lambda^2,M)=P(\lambda,M)P(-\lambda,M).
\end{equation}
Consider equation \eqref{Q1Q2} in $\BC[\lambda, M]$. Without loss of generality, we may assume that $P(\lambda,M)$ divides $Q_1(\lambda^2,M)$. Then $P(\lambda,M)P(-\lambda,M)$ also divides $Q_1(\lambda^2,M)$. Equation \eqref{Q1Q2} implies that $Q_2(\lambda^2,M)$ is a constant polynomial. Hence $Q(L,M)$ is irreducible.
\end{proof}

If $P(L,M)=\sum_{i,j} a_{ij} L^iM^j \in \BC[L,M]$ then its Newton polygon is the smallest convex set in the plane containing all integral lattice points $(i,j)$ for which $a_{ij} \not=0$. 

We recall some properties of the A-polynomial of the twist knot $K_m$ from \cite{HS} in the following lemma. Note that $K_m=J(2,-2m)$ in the notation of \cite{HS}.

\begin{lemma} \cite{HS}
Write $$A_{K_m}(L,M)=(L-1)A'_{K_m}(L,M).$$ Then the polynomial $A'_{K_m}(L,M) \in \BZ[L,M]$ has the following properties.

(i) $A'_{K_m}(L,M)$ has $L$-degree equal to $2m$ if $m>0$ and $-(2m+1)$ if $m<0$.

(ii) $A'_{K_m}(L,M)$ is irreducible in $\BC[L,M]$. 

(iii) the Newton polygon of $A'_{K_m}(L,M)$ has the following vertex set
$$\{(1,8m),(m,8m),(0,4m),(2m,4m),(m,0),(2m-1,0)\}$$
if $m>0$, and
$$\{(0,-8m-2),(-m-1,-8m-2),(1,-4m-4),(-2m-2,-4m+2),(-m,0),(-2m-1,0)\}$$
if $m<0$.

(iv) $A'_{K_m}(L,0)=L^{|m|}(L-1)^{|m|-1}.$
\label{A_poly_twist}
\end{lemma}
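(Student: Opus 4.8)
The plan is to recover the four properties from an explicit recursive description of $A_{K_m}$, following \cite{HS}. Since $K_m$ is a two-bridge knot, its group has a presentation $\langle a,b \mid \Omega a=b\,\Omega\rangle$ in which $\Omega=\omega^{m}$ is a power of a fixed commutator-type word $\omega$ in the meridians $a,b$, so that only the exponent depends on $m$. I would parametrize the nonabelian $SL_2(\BC)$-representations by setting $\rho(a)=\left(\begin{smallmatrix} M & 1 \\ 0 & M^{-1}\end{smallmatrix}\right)$ and $\rho(b)=\left(\begin{smallmatrix} M & 0 \\ -z & M^{-1}\end{smallmatrix}\right)$, so that the representation variety is cut out by a single Riley polynomial $\phi_m(M,z)$. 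The key point is that $\rho(\omega)^{m}$ is governed by the Cayley--Hamilton identity $\rho(\omega)^{k}=S_{k-1}(\tau)\,\rho(\omega)-S_{k-2}(\tau)\,I$ with $\tau=\tr\rho(\omega)$, so that both $\phi_m$ and the longitude eigenvalue $L$ depend on $m$ only through the Chebyshev polynomials $S_{|m|-1}(\tau)$ and $S_{|m|-2}(\tau)$. Eliminating $z$ then yields a recursion for $A_{K_m}$ whose inductive step changes the $L$-degree by a fixed amount.

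Granting such a formula, properties (i) and (iv) are the computational ones. For (i) I would track the $L$-degree through the recursion: the $L$-degree grows by two with each additional full twist, and the base cases $m=\pm1$ --- the figure eight and the trefoil, whose A-polynomials are $(L-1)(L^2M^4-\cdots+M^4)$ and $(L-1)(L+M^6)$ --- pin down the exact values $2m$ and $-(2m+1)$. For (iv) I would substitute $M=0$ directly into the explicit formula: at $M=0$ the meridian matrices and hence $\tau$ degenerate, the Chebyshev factors collapse, and the surviving monomials assemble into $L^{|m|}(L-1)^{|m|-1}$; this evaluation also fixes the sign and normalization of $A'_{K_m}$.

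For the Newton polygon (iii) I would combine two inputs. By \cite{CCGLS} the slopes of the sides of the Newton polygon of an A-polynomial are boundary slopes of incompressible surfaces in the knot complement, and for two-bridge knots these surfaces and their slopes are classified explicitly; this fixes the directions of all six edges. To locate the vertices I would read the extremal monomials off the explicit formula, using the $M=0$ value from (iv) together with the analogous highest-power-of-$M$ evaluation to identify the corners on the bottom and top edges, and then match against the two candidate vertex lists. The central symmetry of the resulting hexagon about $(m,4m)$ (respectively $(-m-\tfrac12,-4m-1)$) reflects the standard $(L,M)\mapsto(L^{-1},M^{-1})$ symmetry of the A-polynomial and halves the work.

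The main obstacle is irreducibility, property (ii). Degree, vertices, and the $M=0$ value all follow from an explicit formula, but irreducibility is structural. It is worth noting that the Newton polygon test is powerless here: the hexagon in (iii) is centrally symmetric, hence a zonotope (a Minkowski sum of three segments), so by Ostrowski's theorem its decomposability gives no obstruction to a factorization, and irreducibility must be argued geometrically. My plan is to deduce it from the character variety: I would show (as can be done from the explicit form of the Riley polynomial, whose $m$-dependence again runs through $S_{|m|-1}$ and $S_{|m|-2}$) that the nonabelian character variety of $K_m$, equivalently the zero locus of $\phi_m(M,z)$, is irreducible. Granting this, the restriction map to the eigenvalue plane sends this irreducible curve to an irreducible image, so the one-dimensional part of the nonabelian character variety viewed in the $(M,L)$-plane is irreducible; since $A_{K_m}$ has no repeated factors by definition, the nonabelian factor $A'_{K_m}$ then equals the irreducible defining polynomial of that image, up to the abelian factor $L-1$, and is therefore irreducible. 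The crux is thus the irreducibility of the Riley curve.
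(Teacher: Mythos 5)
First, a point of comparison: the paper itself gives no proof of this lemma --- it is imported wholesale from Hoste--Shanahan \cite{HS} (with $K_m=J(2,-2m)$ in their notation) --- so your attempt can only be judged against that source and on its own terms. For (i), (iii), (iv) your framework is exactly the mechanism of \cite{HS}: Riley's parametrization of the nonabelian representations, the Cayley--Hamilton identity $\rho(\omega)^{k}=S_{k-1}(\tau)\rho(\omega)-S_{k-2}(\tau)I$ concentrating the $m$-dependence into Chebyshev polynomials, and an explicit formula/recursion from which the $L$-degree, the extreme monomials, and the $M=0$ specialization are read off. Your side remarks are also sound: the base cases check out (for $m=1$ and $m=-1$ the ``hexagon'' degenerates to the quadrilateral of the figure eight knot and the segment of the trefoil), the central symmetry about $(m,4m)$, resp.\ $(-m-\tfrac{1}{2},-4m-1)$, does follow from the $(L,M)\mapsto(L^{-1},M^{-1})$ symmetry, and you are right that no Newton-polygon indecomposability argument can yield (ii), since the polygon is a zonotope. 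One caveat on (iii): \cite{CCGLS} says that edge slopes of the Newton polygon are boundary slopes, not conversely, so the classification of incompressible surfaces in two-bridge complements cannot by itself ``fix the directions of all six edges'' (let alone their lengths); the polygon has to be extracted from the explicit formula, which your outline defers.

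The genuine gap sits at what you yourself call the crux: property (ii) is never proved, only reduced to the assertion that the Riley curve $\{\phi_m(M,z)=0\}$ is irreducible, which you leave as something that ``can be done from the explicit form of the Riley polynomial.'' That assertion is of essentially the same depth as (ii) itself --- for (double) twist knots the irreducibility of these curves is the subject of its own literature (e.g.\ Macasieb--Petersen--van Luijk) --- and nothing about the $S_{|m|-1},S_{|m|-2}$ structure makes it routine; so the hardest part of the lemma remains unproven. There is also an error in the reduction as stated: ``the nonabelian character variety of $K_m$, equivalently the zero locus of $\phi_m(M,z)$'' conflates two different curves. The character variety proper lives in trace coordinates (meridian coordinate $M+M^{-1}$), while the Riley curve carries the eigenvalue coordinate $M$ and is generically a $2$-to-$1$ cover of it; irreducibility downstairs does not imply irreducibility upstairs. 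The distinction is not pedantic: knowing only that the character variety is irreducible, the lift $\hat Z$ of its image in $(\BC^*)^2/\tau$ to $(\BC^*)^2$ could still split into two components interchanged by $\tau$, in which case $A'_{K_m}$ would factor into two $\tau$-conjugate pieces. Your argument is valid only in the form: Riley curve irreducible $\Rightarrow$ its image under $(M,z)\mapsto(M,L(M,z))$, whose closure is the whole curve cut out by $A'_{K_m}$, is irreducible $\Rightarrow$ $A'_{K_m}$ is irreducible (being reduced by definition of the $A$-polynomial). The hypothesis of that implication is exactly what is missing.
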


\begin{proposition} 
Let $$R_{K_m}(L,M):=\emph{Res}_{\lambda} (A'_{K_m}(\lambda, M), \lambda^2-L).$$ Then each of $R_{K_m}(L,M)$ and $R_{K_m}(L,M^2)$ is irreducible in $\BC[L,M]$, and has $L$-degree equal to $2m$ if $m>0$ and $-(2m+1)$ if $m<0$.
\label{R}
\end{proposition}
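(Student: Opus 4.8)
The plan is to deduce both assertions from Lemma \ref{P_Q} and Lemma \ref{A_poly_twist}, with the irreducibility of $R_{K_m}(L,M^2)$ being the genuinely new point. For $R_{K_m}(L,M)$ itself I would apply Lemma \ref{P_Q} directly with $P=A'_{K_m}$, so that $Q=R_{K_m}$. Lemma \ref{A_poly_twist}(ii) supplies the irreducibility of $A'_{K_m}$, so the only hypothesis left to verify is $A'_{K_m}(L,M)\neq A'_{K_m}(-L,M)$. The quickest route is to specialize $M=0$: by Lemma \ref{A_poly_twist}(iv) one has $A'_{K_m}(L,0)=L^{|m|}(L-1)^{|m|-1}$, which is visibly not invariant under $L\mapsto -L$ (alternatively, the Newton polygon in (iii) has a vertex with odd $L$-coordinate, e.g. $(1,8m)$ when $m>0$, or $(1,-4m-4)$ when $m<0$). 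Lemma \ref{P_Q} then yields that $R_{K_m}(L,M)$ is irreducible with $L$-degree equal to $\deg_L A'_{K_m}$, which is $2m$ for $m>0$ and $-(2m+1)$ for $m<0$ by (i).

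For the substitution $M\mapsto M^2$ the $L$-degree is clearly unchanged, so only irreducibility is at stake, and here the decisive input is a computation of $R_{K_m}(L,0)$. Using the explicit factorization $R_{K_m}(L,M)=A'_{K_m}(\sqrt L,M)\,A'_{K_m}(-\sqrt L,M)$ from Lemma \ref{P_Q} and substituting $M=0$ via (iv), I obtain $R_{K_m}(L,0)=-L^{|m|}(L-1)^{|m|-1}$ (using $(\sqrt L-1)(\sqrt L+1)=L-1$). The crucial observation is that, because $|m|$ and $|m|-1$ are consecutive integers, one of the two exponents is odd; hence $R_{K_m}(L,0)$ is nonzero and is not a constant multiple of a perfect square in $\BC[L]$.

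With this in hand I would argue by descent in the UFD $\BC[L,M]$. Put $\Phi(L,M):=R_{K_m}(L,M^2)$, which is invariant under $M\mapsto -M$, and let $p$ be any irreducible factor of $\Phi$; then $p(L,-M)$ is also an irreducible factor. If $p(L,-M)=\pm p(L,M)$, then either $p$ is even in $M$ — writing $p=P(L,M^2)$ and noting the cofactor is again even, $P(L,N)$ is a nonconstant factor of $R_{K_m}(L,N)$, contradicting the irreducibility just established — or $p$ is odd in $M$, which forces $M\mid p$ and hence $R_{K_m}(L,0)=0$, false. Otherwise $p(L,M)$ and $p(L,-M)$ are non-associate irreducibles, so their product is even in $M$ and divides $\Phi$; comparing again with the irreducibility of $R_{K_m}(L,N)$ forces $R_{K_m}(L,M^2)=c\,p(L,M)\,p(L,-M)$ for a constant $c$, and setting $M=0$ gives $R_{K_m}(L,0)=c\,p(L,0)^2$, a constant times a square, contradicting the previous paragraph. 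Hence $\Phi$ is irreducible, completing the proof.

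I expect the main obstacle to be precisely this last step: the substitution $M\mapsto M^2$ does not preserve irreducibility in general, and the only factorization pattern not immediately killed by descent is the conjugate one $\Phi=c\,p(L,M)\,p(L,-M)$. Excluding it is exactly what makes the explicit, non-square evaluation $R_{K_m}(L,0)=-L^{|m|}(L-1)^{|m|-1}$ indispensable, so the combined use of Lemma \ref{A_poly_twist}(iv) and the consecutiveness of $|m|$ and $|m|-1$ is the technical heart of the argument.
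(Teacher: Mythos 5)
Your proof is correct in substance and follows the same route as the paper: Lemma \ref{P_Q} combined with Lemma \ref{A_poly_twist} handles $R_{K_m}(L,M)$, and the irreducibility of $R_{K_m}(L,M^2)$ is reduced to the evaluation $R_{K_m}(L,0)=-L^{|m|}(L-1)^{|m|-1}$ together with the parity observation that the consecutive integers $|m|$ and $|m|-1$ cannot both be even. Two differences are worth recording. A minor one: to check $A'_{K_m}(L,M)\neq A'_{K_m}(-L,M)$ you specialize at $M=0$ via Lemma \ref{A_poly_twist}(iv), whereas the paper exhibits a Newton polygon vertex with odd $L$-coordinate from part (iii); both work, and you mention the polygon argument as an alternative. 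A more substantive one: the paper's proof simply asserts that if $R_{K_m}(L,M^2)$ were reducible it would have to factor as $S(L,M)S(L,-M)$, while your $M\mapsto -M$ descent in the UFD $\BC[L,M]$ actually proves that assertion, so on this point your write-up is more complete than the paper's.

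One logical slip in that descent needs fixing. As literally written, your case analysis makes \emph{every} irreducible factor $p$ of $\Phi=R_{K_m}(L,M^2)$ terminate in a contradiction, which would absurdly show that $\Phi$ has no irreducible factors at all. The problem is the case where $p$ is even in $M$: there, the statement that $P(L,N)$ is a nonconstant factor of $R_{K_m}(L,N)$ is not by itself a contradiction (an irreducible polynomial is a nonconstant factor of itself); you get a contradiction only when the cofactor of $p$ is nonconstant, and when it is constant the correct conclusion is that $\Phi$ is an associate of $p$, i.e.\ irreducible --- which is exactly what you want to prove, not a contradiction. The clean fix is to frame the whole descent as a reductio: assume $\Phi$ is reducible, so that every irreducible factor of $\Phi$ has a nonconstant cofactor; then the even case genuinely contradicts the irreducibility of $R_{K_m}(L,N)$, the odd case contradicts $R_{K_m}(L,0)\neq 0$, and the non-associate case forces $\Phi=c\,p(L,M)\,p(L,-M)$ for a constant $c$, which your non-square evaluation at $M=0$ rules out. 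With that one adjustment the argument is airtight.
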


\begin{proof}
From the Newton polynomial of $A'_{K_m}(L,M)$ in Lemma \ref{A_poly_twist}, we see that $A'_{K_m}(L,M)$ contains at least one monomial $a_{ij}L^iM^j$, where $i$ is an odd integer. This implies that $A'_{K_m}(L,M) \not= A'_{K_m}(-L,M)$. By Lemma \ref{A_poly_twist}, $A'_{K_m}(L,M)$ is irreducible and has $L$-degree equal to $2m$ if $m>0$ and $-(2m+1)$ if $m<0$. Hence, by Lemma \ref{P_Q}, the same conclusion holds true for $R_{K_m}(L,M)=\text{Res}_{\lambda} (A'_{K_m}(\lambda, M), \lambda^2-L)$. Moreover, $$R_{K_m}(L,M)=A'_{K_m}(\sqrt{L}, M)A'_{K_m}(-\sqrt{L}, M).$$

It remains to show that $R_{K_m}(L,M^2)$ is irreducible in $\BC[L,M].$ Otherwise, we must have $R_{K_m}(L,M^2)=S(L,M)S(L,-M)$ for some $S \in \BC[L,M]$. In particular, $$\left( S(L,0) \right)^2=R_{K_m}(L,0)=A'_{K_m}(\sqrt{L},0)A'_{K_m}(-\sqrt{L},0)=- L^{|m|}(L-1)^{|m|-1},$$
since $A_{K_m}(\lambda,0)= \lambda^{|m|}(\lambda-1)^{|m|-1}$ by Lemma \ref{A_poly_twist}. This cannot occur since $|m|$ and $|m|-1$ cannot be not simultaneously even.
\end{proof}

\subsection{The recurrence polynomial} 

Suppose $r$ is an odd integer satisfying $$\begin{cases} (r+8)(r-8m)>0 &\mbox{if } m> 0, \\ 
r(r+8m-4)>0 &\mbox{if } m<0. \end{cases} $$ 
By Proposition \ref{divisible}, we have $\alpha_{K_m^{(r,2)}}=\alpha_{\BJ_{K_m}} (L+t^{-2r}M^{-2r})$.

\begin{lemma}
\label{L2}
For $P(L,M) \in \CT$ we have
$$\left( P(L^2,M)J_K \right)(2n+1) = \left( P(L,t^2M^2)\BJ_K \right)(n).$$
\end{lemma}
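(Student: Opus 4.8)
The plan is to observe that both sides are $\CR$-linear in $P$ and then verify the identity on monomials by direct computation. Here $P(L^2,M)$ and $P(L,t^2M^2)$ must be understood as formal substitutions into the normal form $P=\sum_i a_i(M)L^i$ (coefficients written to the left of the powers of $L$), producing $\sum_i a_i(M)L^{2i}$ and $\sum_i a_i(t^2M^2)L^i$ respectively. This reading is forced, because $L\mapsto L^2$ does not extend to a ring endomorphism of $\CT$: from $LM=t^2ML$ one gets $L^2M=t^4ML^2\ne t^2ML^2$, so there is no homomorphism sending $L$ to $L^2$ and fixing $M$. I would state this convention at the outset. Since substitution into the normal form and evaluation at a fixed index are both additive, it suffices to prove the identity when $P(L,M)=M^jL^i$ is a single monomial with $i,j\in\BZ$.

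For such a monomial I would compute the two sides directly from the definitions $(Mf)(n)=t^{2n}f(n)$, hence $(M^jf)(n)=t^{2jn}f(n)$, and $(L^if)(n)=f(n+i)$, recalling that the action is read right-to-left. On the left, $P(L^2,M)=M^jL^{2i}$, so evaluating at the index $2n+1$ gives
$$\left(M^jL^{2i}J_K\right)(2n+1)=t^{2j(2n+1)}J_K(2n+1+2i)=t^{4jn+2j}J_K(2n+2i+1).$$
On the right, $P(L,t^2M^2)=(t^2M^2)^jL^i=t^{2j}M^{2j}L^i$, so using $\BJ_K(n)=J_K(2n+1)$,
$$\left(t^{2j}M^{2j}L^i\BJ_K\right)(n)=t^{2j}\,t^{4jn}\,\BJ_K(n+i)=t^{4jn+2j}J_K(2n+2i+1).$$
The two expressions coincide, which proves the monomial case; summing over monomials then yields the lemma in full.

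Conceptually, the computation encodes a single change of variables: under $n\mapsto 2n+1$ the shift $L^2$ (a step of $+2$ in the old index) becomes the shift $L$ (a step of $+1$ in the new index), while the operator $M$, which multiplies by $t^{2(2n+1)}=t^2(t^{2n})^2$ at the old index $2n+1$, turns into $t^2M^2$ in the new index. I do not anticipate a genuine obstacle here; the only point requiring care---and the step I would flag explicitly---is fixing the meaning of the non-commutative substitutions $P(L^2,M)$ and $P(L,t^2M^2)$ before invoking linearity, since a careless reading of them as a ring map would be incorrect, as noted above.
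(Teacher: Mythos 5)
Your proof is correct and takes essentially the same approach as the paper: the paper's entire proof is the one-line monomial identity $(M^k L^{2l}J_K)(2n+1)=\left((t^2M^2)^kL^{l}\BJ_K\right)(n)$, with linearity over monomials left implicit. Your explicit remark that the substitutions $P(L^2,M)$ and $P(L,t^2M^2)$ must be read as formal substitutions into the normal form (and not as a ring homomorphism) is a useful clarification, but it does not change the argument.
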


\begin{proof}
This is because $(M^k L^{2l}J_K)(2n+1)=((t^2M^2)^kL^{l}\BJ_K)(n)$.
\end{proof}

To determine $\alpha_{\BJ_{K_m}}$ (and hence $\alpha_{K_m^{(r,2)}}$), by Lemma \ref{L2} we need to find an element $P(L,M) \in \CT$ of minimial $L$-degree such that $P(L^2,M)$ annihilates the colored Jones function $J_{K_m}$. Note that this is done for the figure eight knot $K_1$ in \cite{Ru, Traj8} by using explicit formulas for the colored Jones polynomial. We now use skein theory to show the existence of such an element $P$ for all twist knots.

\subsubsection{The action matrix} Recall that $D=\CR [M^{\pm 1}]$ and $\bar{D}$ its localization at $(1+t)$. Let 
$$d=\begin{cases} 2m &\mbox{if } m>0, \\
-(2m+1) & \mbox{if } m<0. \end{cases}$$

For the twist knot $K_m$, we know that $\CS (X)$ is a free $D^{\sigma}$-module with basis $\{y^i: 0 \le i \le d\}$, see \cite{BL, Le06}. This implies that $\overline{\CS (X)}=\CS (X) \otimes_{D^{\sigma}} \bar{D}$  is a free $\bar{D}$-module with basis $\{y^i: 0 \le i \le d\}$. Note that $\overline{\CT}=\CT^{\sigma} \otimes_{D^{\sigma}} \bar{D}$  is a free $\bar{D}$-module with basis $\{L^i: i \in \BZ\}.$

Recall that $\CS(\partial X)=\CT^{\sigma}$ and $\CS(X)$ has a $\CT^{\sigma}$-left module structure via the gluing of the cylinder over $\partial X=\BT^2$ to $X$. This implies that $\overline{\CS(X)}$ has a $\overline{\CT}$-module structure. We study the action of $\overline{\CT}$ on the $\bar{D}$-module $\overline{\CS (X)}.$ Denote the action of $L$ on  $\overline{\CS (X)}$ by a matrix $\CL \in \text{Mat}_{(d+1) \times (d+1)}(\bar{D})$ and let $e_i:=y^i$ for $0 \le i \le d$.  

We have 
$L \cdot e_i=\sum_j \CL_{ij}e_j$ and
$$L^2 \cdot e_i = L \cdot (L \cdot e_i) = L \cdot (\sum_j \CL_{ij}(M)e_j)
=\sum_j \tau(\CL_{ij}(M))(L \cdot e_j),$$
where $\tau (f(M)):=f(t^2M)$ for $f(M) \in \bar{D}$. Hence 
$$L^2 \cdot e_i = \sum_j \tau(\CL)_{ij}\sum_k \CL_{jk}e_k
=\sum_k (\sum_j \tau(\CL)_{ij}\CL_{jk}) e_k = \sum_k [\tau (\CL)\CL]_{ik} e_k,$$
which means that $L^2$ acts on $\overline{\CS (X)}$ as the matrix $\tau (\CL)\CL$. 
By induction, we can show that $L^j$ acts on $\overline{\CS (X)}$ as the matrix 
$\CL^{(j)}:=\tau^{j-1} (\CL)\tau^{j-2} (\CL) \cdots \tau (\CL)\CL.$

\subsubsection{An annihilator of $\BJ_{K_m}$} Let $v_0:=[1,0, \cdots, 0]^{\text{T}}$ be a vector in $\bar{D}^{d+1}$. Recall that $\bar{\Theta}: \bar{\CT} \to \overline{\CS(X)}$ is the map defined by $\bar{\Theta}(\ell)=\ell \cdot \emptyset$, and $\bar{\CP} \subset \bar{\CT}$ its kernel.

\begin{lemma}
Let $P(L,M) = \sum_i a_i(M) L^i \in \bar{\CT}$ where $a_i(M) \in \bar{D}$. Then $P \in \bar{\CP}$ if and only if $\sum_i a_i(M)  \CL^{(i)}v_0=0$.
\label{v_0}
\end{lemma}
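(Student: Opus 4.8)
The plan is to unwind the definition of $\bar{\Theta}$ and use the matrix description of the $\bar{\CT}$-action on $\overline{\CS(X)}$ established in the preceding subsection, together with the fact that $\overline{\CS(X)}$ is a \emph{free} $\bar{D}$-module with basis $\{e_k : 0 \le k \le d\}$. First I would recall that $\bar{\CP} = \ker\bar{\Theta}$ and that $\bar{\Theta}(P) = P \cdot \emptyset$, so that $P \in \bar{\CP}$ is by definition equivalent to $P \cdot \emptyset = 0$ in $\overline{\CS(X)}$. Since $\emptyset = y^0 = e_0$ is precisely the basis element whose coordinate vector is $v_0$, the whole statement reduces to computing the coordinate vector of $P \cdot e_0$ and checking that it equals $\sum_i a_i(M)\CL^{(i)}v_0$.

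Next I would carry out this computation. Writing $P = \sum_i a_i(M) L^i$ and using associativity of the module action, $P \cdot e_0 = \sum_i a_i(M)\,(L^i \cdot e_0)$. By the computation of the previous subsection, $L^i$ acts on $\overline{\CS(X)}$ as the matrix $\CL^{(i)}$, so the coordinate vector of $L^i \cdot e_0$ is $\CL^{(i)}v_0$. The one point that needs care is the coefficient $a_i(M)$: because it is applied \emph{after} $L^i$ (it is outermost in the product $a_i(M)L^i$), it acts by ordinary $\bar{D}$-scalar multiplication, with no $\tau$-twist. This is in contrast to the twists $\tau^{j}$ appearing \emph{inside} $\CL^{(i)}$, which arise exactly from commuting the $L$'s past the matrix entries of $\CL$ when expanding $L^i = L\cdots L$. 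Hence the coordinate vector of $P \cdot e_0$ is $\sum_i a_i(M)\CL^{(i)}v_0$.

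Finally I would invoke freeness: since $\{e_k\}$ is a $\bar{D}$-basis of $\overline{\CS(X)}$, an element vanishes if and only if its coordinate vector vanishes. Therefore $P \cdot \emptyset = 0$ if and only if $\sum_i a_i(M)\CL^{(i)}v_0 = 0$, which is the claim; both implications fall out of the single displayed identity. For negative $i$ the same identity holds once one notes that $L$ acts invertibly over the localized ring $\bar{D}$, so that $\CL$, and hence each $\CL^{(i)}$, is defined there; this is one place where passing from $D^\sigma$ to $\bar{D}$ is used.

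I expect the main obstacle to be purely bookkeeping rather than any genuine analytic or algebraic difficulty. The two things that must be pinned down are the order of operations, so that the coefficients $a_i(M)$ enter untwisted while the internal structure of $\CL^{(i)}$ carries the $\tau$-twists, and the row/column convention, so that $L^i \cdot e_0$ corresponds to $\CL^{(i)}v_0$ and not to its transpose. Once these conventions are fixed consistently with the preceding subsection, the result is immediate from the definition of $\bar{\Theta}$ and the freeness of $\overline{\CS(X)}$ over $\bar{D}$.
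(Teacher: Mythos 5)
Your proof is correct and follows essentially the same route as the paper's: both unwind $\bar{\CP}=\ker\bar{\Theta}$ to the condition $P\cdot e_0=0$, compute the coordinates of $P\cdot e_0$ via the matrices $\CL^{(i)}$, and conclude by freeness of $\overline{\CS(X)}$ over $\bar{D}$. Your extra remarks on the untwisted outer coefficients $a_i(M)$, the row/column convention, and the case of negative powers of $L$ are points the paper passes over silently, and they are handled correctly.
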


\begin{proof}
By definition, $P \in \bar{\CT}$ is in $\bar{\CP}$ if and only if $P \cdot e_0=0$ in $\overline{\CS(X)}$. We have 
$$P \cdot e_0 = (\sum_i a_i L^i) \cdot e_0 = \sum_i a_i \sum_j \CL^{(i)}_{0j} e_j = \sum_j ( \sum_i a_i  \CL^{(i)}_{0j}) e_j.$$
Hence $P \cdot e_0=0$ if and only if $\sum_i a_i  \CL^{(i)}_{0j}=0$ for all $j$. This system of equations can be rewritten as $\sum_i a_i  [\CL^{(i)}_{00}, \cdots, \CL^{(i)}_{0m}]^{\text{T}}=0$, which is equivalent to $\sum_i a_i  \CL^{(i)}v_0=0$. 
\end{proof}

Let $v_j:=\CL^{(2j)}v_0 \in \bar{D}^{d+1}$ for $1 \le j \le d+1$, and $\CM:=[v_0, \cdots v_{d}] \in \text{Mat}_{(d+1) \times (d+1)}(\bar{D}).$ 

\begin{proposition}
\label{lin}
We have $$\det \ve(\CM) \not= 0.$$
\end{proposition}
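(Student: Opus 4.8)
The plan is to recognize $\ve(\CM)$ as a Krylov (generalized Vandermonde) matrix and to reduce the claim to a cyclic-vector statement that is then decided by the irreducibility results already established. First I would use that at $t=-1$ we have $t^{2}=1$, so the operator $\tau$, $\tau(f(M))=f(t^{2}M)$, reduces to the identity. Hence $\ve(\CL^{(2j)})=\ve(\CL)^{2j}$, and writing $\bar\CL:=\ve(\CL)$ and $w:=\ve(v_0)$ the columns of $\ve(\CM)$ are $w,\bar\CL^{2}w,\bar\CL^{4}w,\dots,\bar\CL^{2d}w$. Thus $\det\ve(\CM)\neq0$ is equivalent to saying that $w$ is a cyclic vector for the operator $\bar\CL^{2}$ on the $(d+1)$-dimensional $\BC(M)$-vector space $V:=\ve(\overline{\CS(X)})$.

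Next I would identify this operator geometrically. Because $K_m$ is a two-bridge knot, the nilradical of $\ve(\CS(X))$ vanishes, so $V$ is a reduced commutative $\BC(M)$-algebra (a localization of $\BC[\chi(X)]$), the reduction of $\bar\Theta$ at $t=-1$ is a ring homomorphism, and $\bar\CL$ acts as multiplication by the image of $L$ in $V$. The skein/character-variety dictionary, together with $B_{K_m}\eqM A_{K_m}$ (Proposition \ref{A_B}), identifies the characteristic polynomial of $\bar\CL$ in the variable $L$ with $A_{K_m}(L,M)=(L-1)A'_{K_m}(L,M)$, up to a factor depending on $M$ only. By Lemma \ref{A_poly_twist} the polynomial $A'_{K_m}$ is irreducible of $L$-degree $d$, and $(L-1)\nmid A'_{K_m}$ since $A_{K_m}$ has no repeated factors. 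As $V$ is reduced, these two coprime factors split it as $V\cong\BC(M)\times E$, where $E:=\BC(M)[L]/(A'_{K_m}(L,M))$ is a field of degree $d$ over $\BC(M)$; under this isomorphism the unit $w=\emptyset$ corresponds to $(1,1)$ and $\bar\CL$ to $(1,\bar L)$ with $\bar L$ the class of $L$ in $E$.

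It then remains to run the Krylov computation inside the splitting, and this is exactly where Proposition \ref{R} is used. Since $R_{K_m}(\bar L^{2},M)=A'_{K_m}(\bar L,M)A'_{K_m}(-\bar L,M)=0$ and $R_{K_m}(L,M)$ is irreducible of degree $d=\dim_{\BC(M)}E$, the minimal polynomial of $\bar L^{2}$ over $\BC(M)$ is $R_{K_m}(L,M)$, so $1,\bar L^{2},\dots,\bar L^{2(d-1)}$ is a basis of $E$. Any relation $\sum_{j=0}^{d}c_j(\bar\CL^{2})^{j}w=\sum_{j=0}^{d}c_j(1,\bar L^{2j})=0$ forces, from the $E$-coordinate, the vector $(c_0,\dots,c_d)$ to be proportional to the coefficient vector of $R_{K_m}(L,M)$, while the $\BC(M)$-coordinate forces $\sum_j c_j=0$; the latter sum is then a scalar multiple of $R_{K_m}(1,M)=A'_{K_m}(1,M)\,A'_{K_m}(-1,M)$. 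This is nonzero: $A'_{K_m}(1,M)\neq0$ because $(L-1)\nmid A'_{K_m}$, and $A'_{K_m}(-1,M)\neq0$ because $A'_{K_m}(-1,0)=(-1)^{|m|}(-2)^{|m|-1}\neq0$ by Lemma \ref{A_poly_twist}(iv). Hence the only relation is trivial, $w$ is cyclic for $\bar\CL^{2}$, and $\det\ve(\CM)\neq0$.

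The step I expect to be the main obstacle is the middle one: making precise that the characteristic polynomial of the skein-theoretic $L$-action $\bar\CL$ is $A_{K_m}(L,M)$ up to an $M$-factor, i.e. that $\emptyset$ generates $\overline{\CS(X)}$ over $\overline{\CT}$ and that the resulting cyclic module is $\BC(M)[L]/(A_{K_m})$. Once this bridge between the action matrix and the $A$-polynomial is in place, the remainder is linear algebra over the field $\BC(M)$ fed by the two irreducibility inputs, Lemma \ref{A_poly_twist}(ii) and Proposition \ref{R}, plus the coprimality check from Lemma \ref{A_poly_twist}(iv).
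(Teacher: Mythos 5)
Your proof is correct, and it rests on the same two pillars as the paper's: Proposition \ref{A_B}, through which the $t=-1$ action of $L$ is identified via $B_{K_m}\eqM A_{K_m}=(L-1)A'_{K_m}$, and the irreducibility of $R_{K_m}$ from Proposition \ref{R}. The execution, however, is genuinely different. The paper never splits the module: it observes that the minimal polynomial of the matrix $\ve(\CL)$ is $B_{K_m}$, asserts that the minimal polynomial of $\ve(\CL)^2$ is $\text{Res}_L\left(B_{K_m}(L,M),L^2-L'\right)\eqM (L'-1)R_{K_m}(L',M)$, of $L'$-degree $d+1$, and converts this through Lemma \ref{v_0} into the linear independence of $\ve(v_0),\dots,\ve(v_d)$. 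You instead make the Chinese-remainder splitting $V\cong\BC(M)\times E$ explicit and run the Krylov relation in coordinates, which forces the extra verification $R_{K_m}(1,M)=A'_{K_m}(1,M)A'_{K_m}(-1,M)\neq 0$ via Lemma \ref{A_poly_twist}(iv). That verification is not redundant bookkeeping: the paper's claim that the resultant is the \emph{minimal} (not merely an annihilating) polynomial of $\ve(\CL)^2$ is precisely equivalent to $A'_{K_m}(-1,M)\not\equiv 0$, since otherwise the proper divisor $R_{K_m}$ of $(L'-1)R_{K_m}$ would already kill $\ve(v_0)$, because $(L-1)A'_{K_m}(L)$ would divide $R_{K_m}(L^2)=\pm A'_{K_m}(L)A'_{K_m}(-L)$. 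So your route supplies a justification the paper leaves implicit; it is automatic from irreducibility when $d\geq 2$, but for the trefoil ($m=-1$, $d=1$) one really does need Lemma \ref{A_poly_twist}(iv).

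The step you flag as the main obstacle is the only incomplete point, and it closes with a dimension count whose ingredients are all in the paper. The image of $\bar{\theta}:\BC(M)[L^{\pm1}]\to V$ is $\BC(M)[L^{\pm1}]/(B_{K_m})$, of dimension $\deg_L B_{K_m}=\deg_L A_{K_m}=d+1$ by Proposition \ref{A_B} and Lemma \ref{A_poly_twist}(i); on the other hand $\dim_{\BC(M)}V=d+1$ because $\CS(X)$ is free over $D^\sigma$ with the $(d+1)$-element basis $\{y^i : 0\le i\le d\}$ and $\ve(\CS(X))=\BC[\chi(X)]$. Hence $\bar{\theta}$ is onto, $\emptyset$ is a cyclic vector, and $V\cong\BC(M)[L]/\left((L-1)A'_{K_m}\right)\cong\BC(M)\times E$ follows from coprimality of the two factors alone---reducedness of $V$ is not actually needed for the splitting. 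Alternatively, you can bypass cyclicity entirely, as the paper implicitly does: $\ve(\CL)$ is multiplication by $\bar{\theta}(L)$ in a commutative ring whose unit is $\ve(v_0)=\emptyset$, so a polynomial annihilates the matrix $\ve(\CL)^2$ if and only if it annihilates the vector $\ve(v_0)$, and the minimal polynomial of the operator coincides with the minimal annihilator of $v_0$ for free.
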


\begin{proof}

This is equivalent to show that the vectors $\ve(v_0), \cdots, \ve(v_d)$ are linearly independent in $\BC(M)^{d+1}$. Note that $\ve(\CS(X))=\BC[\chi(X)]$ for the twist knot $K_m$. 

Recall the action of $\ft^{\sigma}$ on the $\BC[M^{\pm 1}]^{\sigma}$-module $\BC[\chi(X)]$ in Subsection \ref{B}. This action induces an action of $\bar{\ft}=\BC(M)[L^{\pm 1}]$ on the $\BC(M)$-vector space $\overline{\BC[\chi(X)]}$. Then the matrix of the action of $L$ on $\overline{\BC[\chi(X)]}$ is equal to $\ve(\CL) \in \text{Mat}_{(d+1) \times (d+1)}(\BC(M))$. From the definition of the B-polynomial of $K_m$, we see that $B_{K_m}(\ve(\CL),M)$ is the minimal polynomial of $\ve(\CL)$. Moreover, by Proposition \ref{A_B} we have
$$B_{K_m}(L,M) \eqM A_{K_m}(L,M)=(L-1)A'_{K_m}(L,M).$$

Let $\CL'=\CL^{(2)}$ and $C_{K_m}(\ve(\CL'),M)$ be the minimal polynomial of $\ve(\CL')=\ve(\CL)^2$. Then 
\begin{eqnarray*}
C_{K_m}(L',M) &=& \text{Res}_L (B_{K_m}(L,M), L^2-L') \\
              &\eqM& \text{Res}_L ((L-1)A'_{K_m}(L, M), L^2-L')\\
              &=& (L'-1)R_{K_m}(L',M).
\end{eqnarray*}

By Proposition \ref{R}, $R_{K_m}(L',M)$ has $L'$-degree $d$ and hence $C_{K_m}(L',M)$ has $L'$-degree $d+1$. By Lemma \ref{v_0}, $C_{K_m}(\ve(\CL'),M)$ is also a polynomial in $\BC[\ve(\CL'),M]$ of minimal $\ve(\CL')$-degree that annihiates $\ve(v_0)$. Since $\ve(v_i)=\ve(\CL')^i \ve(v_0)$, it follows that the vectors $\ve(v_0), \cdots, \ve(v_d)$ are linearly independent in $\BC(M)^{d+1}$. Hence $\det \ve(\CM) \not= 0$.
\end{proof}

We are now ready to define an annihilator of $\BJ_{K_m}$. For $0 \le i \le d$ let $\CM_i$ be the matrix obtained by replacing the $(i+1)^{\text{th}}$ column of $\CM$ by the column vector $v_{d+1}$. Let $b_i(M):= \frac{\det \CM_i}{\det \CM}.$ Proposition \ref{lin} implies that $b_i(M) \in \bar{D}$. Let
$$\beta(L,M) := -L^{d+1} + \sum_{i=0}^{d} b_i(t^2M^2) L^{i}  \in \bar{\CT}.$$

\begin{proposition}
\label{beta}
We have $\beta \BJ_{K_m}=0$. Moreover $$\ve(\beta) \eqM (L-1)R_{K_m}(L,M^2).$$
\end{proposition}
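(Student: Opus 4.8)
The plan is to exhibit one explicit element of $\bar{\CP}$ whose shape, after the change of variables in Lemma \ref{L2}, is precisely $\beta$, and then to identify its classical limit with the minimal polynomial already computed in the proof of Proposition \ref{lin}. First I would unwind the definition of the $b_i$: by Cramer's rule applied to $\CM=[v_0,\dots,v_d]$, the scalars $b_i(M)=\det\CM_i/\det\CM$ are exactly the solution of $\sum_{i=0}^d b_i(M)v_i=v_{d+1}$, which, since $v_j=\CL^{(2j)}v_0$, reads $\sum_{i=0}^d b_i(M)\CL^{(2i)}v_0=\CL^{(2(d+1))}v_0$. Now set $Q(L,M):=-L^{d+1}+\sum_{i=0}^d b_i(M)L^i\in\bar{\CT}$, so that $Q(L,t^2M^2)=\beta(L,M)$ and $Q(L^2,M)=-L^{2(d+1)}+\sum_{i=0}^d b_i(M)L^{2i}$. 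Reading the coefficients of $Q(L^2,M)$ into Lemma \ref{v_0}, the Cramer identity is exactly the required condition $\sum_i a_i\CL^{(i)}v_0=0$, so $Q(L^2,M)\in\bar{\CP}$.

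Second, I would transport this annihilation to the colored Jones function. Since $\bar{\CP}$ is the ideal extension of $\CP$ and $\CP\subset\CA_{K_m}$ by Proposition \ref{PA}, every element of $\bar{\CP}$ annihilates $J_{K_m}$ in the localized setting; in particular $Q(L^2,M)J_{K_m}=0$. Applying Lemma \ref{L2} to $Q$ then yields $(Q(L^2,M)J_{K_m})(2n+1)=(Q(L,t^2M^2)\BJ_{K_m})(n)=(\beta\BJ_{K_m})(n)$, so $\beta\BJ_{K_m}=0$, which settles the first assertion.

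Third, for the classical limit I would apply $\ve$. Because $\tau$ becomes the identity at $t=-1$ we have $\ve(\CL^{(j)})=\ve(\CL)^j$, hence $\ve(v_j)=\ve(\CL)^{2j}\ve(v_0)$, and $\det\ve(\CM)\neq 0$ by Proposition \ref{lin}, so Cramer's rule survives reduction and $\ve(b_i)=\det\ve(\CM_i)/\det\ve(\CM)\in\BC(M)$. Writing $g(L,M):=-L^{d+1}+\sum_{i=0}^d\ve(b_i)(M)L^i$ and $\CL':=\ve(\CL)^2$, the reduced Cramer identity becomes $g(\CL',M)\,\ve(v_0)=0$. In the proof of Proposition \ref{lin} it is shown that the minimal polynomial of $\CL'$ annihilating the cyclic vector $\ve(v_0)$ is $C_{K_m}(L',M)$, of $L'$-degree $d+1$, with $C_{K_m}\eqM(L'-1)R_{K_m}(L',M)$. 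As $g$ also has $L'$-degree $d+1$ with leading coefficient $-1$ and kills $\ve(v_0)$, minimality forces $g\eqM C_{K_m}\eqM(L'-1)R_{K_m}(L',M)$. Finally, tracking the substitution $M\mapsto t^2M^2$ through $\ve$ sends $t^2\mapsto 1$, so $\ve(\beta)(L,M)=g(L,M^2)$; since $\eqM$ is preserved under $M\mapsto M^2$, I obtain $\ve(\beta)\eqM(L-1)R_{K_m}(L,M^2)$.

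The step I expect to be most delicate is the passage $Q(L^2,M)\in\bar{\CP}\Rightarrow Q(L^2,M)J_{K_m}=0$, i.e. making precise that an element of the localized peripheral ideal genuinely annihilates the colored Jones function rather than only its image in $\overline{\CS(X)}$; this is where one leans on $\CP\subset\CA_{K_m}$ together with the flatness of $\bar{D}$ over $D^\sigma$ to clear $(1+t)$-denominators without destroying the recurrence. The remaining bookkeeping—Cramer's rule, the degree-and-minimality matching against $C_{K_m}$, and the two $M$-substitutions—is routine once the cyclicity of $\ve(v_0)$ established in Proposition \ref{lin} is in hand.
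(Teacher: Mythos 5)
Your proposal is correct and follows essentially the same three-step argument as the paper: Cramer's rule produces the element $-L^{2d+2}+\sum_i b_i(M)L^{2i}$ of $\bar{\CP}$, the inclusion $\bar{\CP}\subset\bar{\CA}$ (Proposition \ref{PA}) together with Lemma \ref{L2} gives $\beta\BJ_{K_m}=0$, and the classical limit is identified with $C_{K_m}(L,M^2)=(L-1)R_{K_m}(L,M^2)$. The only (cosmetic) difference is in the last step, where you invoke minimality of $C_{K_m}$ plus degree matching to get $g\eqM C_{K_m}$, whereas the paper runs the Cramer uniqueness argument on $\ve(v_0),\dots,\ve(v_d)$ to show $c_i/c_{d+1}=\ve(b_i)$ directly; both rest on the linear independence established in Proposition \ref{lin}.
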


\begin{proof}
From the definition of $b_i(M)$ and Cramer's rule we have $\sum_{i=0}^{d} b_i(M)v_i=v_{d+1}$, which means that $( -\CL^{(2d+2)}+\sum_{i=0}^{d} b_i(M) \CL^{(2i)} )v_0=0$. Then, by Lemma \ref{v_0}, $ -L^{2d+2} + \sum_{i=0}^{d} b_i(M) L^{2i}$ is an element in $\bar{\CP}$.

Let $\bar{\CA} := \CA \otimes_{D} \bar{D}$ be the localized recurrence ideal of $K_m$. Proposition \ref{PA} implies that $\bar{\CP} \subset \bar{\CA}$ and hence $\ -L^{2d+2} + \sum_{i=0}^{d} b_i(M) L^{2i} \in \bar{\CA}$.  By Lemma \ref{L2} we have 
\begin{eqnarray*}
\beta \BJ_{K_m}(n) &=& ( -L^{d+1} + \sum_{i=0}^{d} b_i(t^2M^2) L^{i} )\BJ_{K_m}(n)\\
&=& (  -L^{2d+2} + \sum_{i=0}^{d} b_i(M) L^{2i} ) J_{K_m}(2n+1)=0.\\
\end{eqnarray*}
Hence $\beta \BJ_{K_m}=0$. It remains to show that $\ve(\beta) \eqM C_{K_m}(L,M^2)$.

Write $C_{K_m}(L,M)=-c_{d+1}(M)L^{d+1}+\sum_{i=0}^{d} c_i(M)L^i$, where $c_i(M) \in \BC(M)$ for $0 \le i \le d+1$ and $c_{d+1}(M) \not =0$. Since $C_{K_m}(\ve(\CL'),M)$ is a polynomial in $\BC[\ve(\CL'),M]$ that annihiates $\ve(v_0)$, we have 
$$\big( -c_{d+1}(M)\ve(\CL')^{d+1}+\sum_{i=0}^{d} c_i(M) \ve(\CL')^i \big) \ve(v_0)=0.$$
This implies that $\sum_{i=0}^d \frac{c_i(M)}{c_{d+1}(M)} \, \ve(v_i) = \ve(v_{d+1}).$ Since the vectors $\ve(v_0), \cdots, \ve(v_d)$ are linearly independent in $\BC(M)^{d+1}$, Cramer's rule implies that $$\frac{c_i(M)}{c_{d+1}(M)}=\frac{\det \ve(\CM_i)}{\det \ve(\CM)}=\ve(b_i(M)).$$ 
Hence 
\begin{eqnarray*}
\ve(\beta) &=&  -L^{d+1} + \sum_{i=0}^{d} \ve(b_i(t^2M^2)) L^{i} \\
           &=& -L^{d+1} + \sum_{i=0}^{d} \frac{c_i(M^2)}{c_{d+1}(M^2)} L^{i} \\
           &\eqM& C_{K_m}(L,M^2).
\end{eqnarray*}
The proposition follows since $C_{K_m}(L,M^2)=(L-1)R_{K_m}(L,M^2)$.
\end{proof}

\subsection{Completing the proof of Theorem \ref{main}} Since $K_m$ is an alternating knot, Propositions 2.5 and 2.6 in \cite{Traj8} imply that $\ve(\alpha_{\BJ_{K_m}})$ is divisible by $L-1$ and has $L$-degree $>1$. By Proposition \ref{beta}, $\beta$ annihilates $\BJ_{K_m}$ and hence is left divisible by $\alpha_{\BJ_{K_m}}$. We have $\ve(\alpha_{\BJ_{K_m}})$ divides $\ve(\beta) \eqM (L-1)R_{K_m}(L,M^2)$, and hence $\frac{\ve(\alpha_{\BJ_{K_m}})}{L-1}$ divides $R_{K_m}(L,M^2)$ in $\BC(M)[L]$. Since $\frac{\ve(\alpha_{\BJ_{K_m}})}{L-1}$ has $L$-degree $\ge 1$ and $R_{K_m}(L,M^2)$ is irreducible in $\BC[L,M]$, we conclude that $\frac{\ve(\alpha_{\BJ_{K_m}})}{L-1} \eqM R_{K_m}(L,M^2)$. Therefore, by equation \eqref{A_cable} we have
\begin{eqnarray*}
A_{K_m^{(r,2)}}(L,M) &=& (L-1)R_{K_m}(L,M^2)(L+M^{-2r})\\
                &\eqM &\ve(\alpha_{\BJ_{K_m}})(L+M^{-2r})\\
                & \eqM &\ve(\alpha_{K_m^{(r,2)}}).
\end{eqnarray*}
This completes the proof of Theorem \ref{main}.


\begin{thebibliography}{99999}

\baselineskip15pt

\bibitem[BP]{BP} D. Bullock and J. Przytycki, {\em Multiplicative structure of Kauffman bracket skein module quantizations}, Proc. Amer. Math. Soc. \textbf{128} (2000), no. 3, 923--931.

\bibitem[Bu]{Bu} D. Bullock, {\em Rings of $Sl_2(\BC)$-characters and the Kauffman bracket skein module}, Comment. Math. Helv. \textbf{72} (1997), no. 4, 521--542.

\bibitem[BL] {BL} D. Bullock and W. Lofaro, {\em The Kauffman bracket skein module of a twist knot exterior}, Algebr. Geom. Topol.  {\bf 5} (2005), 107--118 (electronic).

\bibitem[CCGLS]{CCGLS} D. Cooper, M. Culler, H. Gillet, D. Long and P. Shalen, {\em Plane curves associated to character varieties of 3-manifolds}, Invent. Math. \textbf{118} (1994), pp. 47--84.

\bibitem[CS] {CS}  M. Culler and P. Shalen, {\em Varieties of group representations and splittings of $3$-manifolds}, Ann. of Math. (2) {\bf 117} (1983), no. 1, 109--146.

\bibitem[FG]{FG} C. Frohman and R. Gelca, {\em Skein modules and the noncommutative torus}, Trans. Amer. Math. Soc. \textbf{352} (2000), no. 10, 4877--4888.

\bibitem[FGL]{FGL} C. Frohman, R. Gelca, and W. Lofaro,
{\em The A-polynomial from the noncommutative viewpoint},  Trans.
Amer. Math. Soc. {\bf 354} (2002), no. 2, 735--747.

\bibitem[Ga1]{Ga08} S. Garoufalidis, {\em Difference and differential equations for the colored Jones function}, J. Knot Theory and Its Ramifications \textbf{17} (2008), no. 4, 495--510.

\bibitem[Ga2]{Ga04} S. Garoufalidis, {\em On the characteristic and deformation varieties of a knot},  Proceedings of the Casson Fest,  Geom. Topol. Monogr., vol. 7, Geom. Topol. Publ., Coventry, 2004, 291--309 (electronic).

\bibitem[Ge]{Ge} R. Gelca, {\em On the relation between the A-polynomial and the Jones polynomial}, Proc. Amer. Math. Soc. \textbf{130} (2002), no. 4, 1235--1241.

\bibitem[GK]{GK} C. Koutschan and S. Garoufalidis, {\em Irreducibility of $q$-difference operators and the knot $7_4$}, Algebr. Geom. Topol. (to appear). 

\bibitem[GL]{GL} S. Garoufalidis and T. Le, {\em The colored Jones function is $q$-holonomic},
Geom. Topol. \textbf{9} (2005), 1253--1293 (electronic).

\bibitem[Hi]{Hi} K. Hikami, {\em Difference equation of the colored Jones polynomial for torus knot}, Internat. J. Math. \textbf{15} (2004), no. 9, 959--965.

\bibitem[HS]{HS} J. Hoste and P. Shanahan, {\em A formula for the A-polynomial of twist knots},  J. Knot Theory Ramifications  {\bf 14}  (2005),  no. 1, 91--100.

\bibitem[Jo]{Jones} V. Jones, {\em Hecke algebra representations of braid groups and link polynomials},
Ann. of Math. (2) {\bf 126} (1987), 335--388.

\bibitem[Ka]{Ka} L. Kauffman, {\em States models and the Jones polynomial}, Topology, {\bf 26} (1987), 395--407.

\bibitem[Li]{Li} W. Lickorish, An Introduction to Knot Theory, Grad. Texts in Math., vol. 175, Springer, 1997.

\bibitem[Le]{Le06} T. Le, {\em The colored Jones polynomial and the A-polynomial of knots}, Adv. Math. \textbf{207} (2006), no. 2, 782--804.

\bibitem[LM]{LM} A. Lubotzky and A. Magid, {\em Varieties of representations of finitely
generated groups}, Memoirs of the AMS {\bf 336} (1985).

\bibitem[LT1]{LTaj} T. Le and A. Tran, {\em On the AJ conjecture for knots}, preprint 2012, arXiv:1111.5258.

\bibitem[LT2]{LTvol} T. Le and A. Tran, {\em On the volume conjecture for cables of knots}, J. Knot Theory Ramifications \textbf{19} (2010), no. 12, 1673--1691.

\bibitem[LT3]{LTskein} T. Le and A. Tran, {\em The Kauffman bracket skein module of two-bridge links}, Proc. Amer. Math. Soc. \textbf{142} (2014), no. 3, 1045--1056.

\bibitem[Pr]{Pr} J. Przytycki, {\em Fundamentals of Kauffman bracket skein modules}, Kobe J. Math. \textbf{16} (1999) 45--66.

\bibitem[PS]{PS} J. Przytycki and A. Sikora, {\em On the skein algebras and $Sl_2(\BC)$-character varieties}, Topology \textbf{39} (2000), 115--148.

\bibitem[RT]{RT} N. Yu. Reshetikhin and V. Turaev, {\it Ribbon graphs and their invariants derived from quantum
groups}, Commun. Math. Phys. {\bf 127} (1990), 1--26.

\bibitem[Ru]{Ru} D. Ruppe, {\it The AJ conjecture and cabled knots over the figure eight knot}, preprint 2014, arXiv:1405.3887.

\bibitem[RZ]{RZ} D. Ruppe and X. Zhang, {\em The AJ conjecture and cabled knots over torus knots}, preprint 2014, arXiv:1403.1858.

\bibitem[Sa]{Sa} P. Sallenave, {\em Structure of the Kauffman bracket skein algebra of $\BT^2 \times I$}, J. Knot Theory and Its Ramifications \textbf{8} (1999), no. 3, 367--372.

\bibitem[Tr1]{Traj8} A. Tran, {\em On the AJ conjecture for cables of the figure eight knot}, New York J. Math \textbf{20} (2014) 727--741.

\bibitem[Tr2]{Tr} A. Tran, {\em Proof of a stronger version of the AJ conjecture for torus knots}, Algebr. Geom. Topol. \textbf{13} (2013), no. 1, 609--624.

\bibitem[Tu]{Tu} V. Turaev, {\em Skein quantization of Poisson algebras of loops on surfaces}, Ann. Sci. École Norm. Sup. (4) \textbf{24} (1991), no. 6, 635--704.

\end{thebibliography}
\end{document}